\documentclass[11pt]{article}
\usepackage{amsmath} \usepackage{amssymb} \usepackage{latexsym} \usepackage{enumitem} \usepackage{mathrsfs} \usepackage{comment}
\usepackage{color}
\usepackage{cite}

\usepackage[colorlinks=true,urlcolor=blue,
citecolor=red,linkcolor=blue,linktocpage,pdfpagelabels,bookmarksnumbered,bookmarksopen]{hyperref}

\allowdisplaybreaks[4]

\newtheorem{assumption}{Assumption}

\def\qed{ \ \vrule width.2cm height.2cm depth0cm\smallskip}
\newenvironment{proof}{\noindent {\bf Proof.\/}}{$\qed$\vskip 0.1in}

\newcommand{\ba}{\begin{array}}
\newcommand{\ea}{\end{array}}
\newcommand{\be}{\begin{equation}}
\newcommand{\ee}{\end{equation}}
\newcommand{\bea}{\begin{eqnarray}}
\newcommand{\eea}{\end{eqnarray}}
\newcommand{\beaa}{\begin{eqnarray*}}
\newcommand{\eeaa}{\end{eqnarray*}}

\def\dbE{\mathbb{E}}
\def\dbF{\mathbb{F}}
\def\dbG{\mathbb{G}}

\def\dbL{\mathbb{L}}

\def\dbP{\mathbb{P}}
\def\dbR{\mathbb{R}}

\def\dbZ{\mathbb{Z}}

\def\rme{\mathrm{e}}
\def\rmd{\mathrm{d}}
\def\mcu{\mathcal{U} }
\def\mcv{\mathcal{V} }

\def\si{\sigma}

\def\O{\Omega}
\def\cB{{\cal B}}

\def\cF{{\cal F}}
\def\cG{{\cal G}}
\def\cH{{\cal H}}

\def\cK{{\cal K}}
\def\cL{{\cal L}}

\def\cN{{\cal N}}

\def\cP{{\cal P}}

\def\cW{{\cal W}}

\def\no{\noindent}

\def\q{\quad}

\def\pa{\partial}

\def\qed{ \hfill \vrule width.25cm height.25cm depth0cm\smallskip}

\newcommand{\basa}{\begin{assumption}}
\newcommand{\easa}{\end{assumption}}

\newcommand{\bas}{\begin{assum}}
\newcommand{\eas}{\end{assum}}

\def\liminf{\mathop{\underline{\rm lim}}}

\def\pa{\partial}

\def\1{{\bf 1}}

\def\:{\!:\!}
\def\reff{\eqref}
\def \proof{{\noindent \bf Proof.\quad}}

\def\Oin{\Omega_{\text{input}}}
\def\Oout{\Omega_{\text{output}}}
\def\Ocan{\Omega_{\text{canon}}}
\def\Oto{\Omega_{\text{total}}}
\def\Qin{Q_{\text{input}}}
\def\oin{\omega_{\text{input}}}

\def\gxX{\nabla X^{x,\xi}}
\def\gxY{\nabla Y^{x,\xi}}
\def\gxZ{\nabla Z^{x,\xi}}

\def\dX{\delta X^{\xi,\eta}}
\def\dY{\delta Y^{\xi,\eta}}
\def\dZ{\delta Z^{\xi,\eta}}
\def\tdX{\delta \tilde{X}^{\xi,\eta}}

\def\dxX{\delta X^{x,\xi,\eta}}
\def\dxY{\delta Y^{x,\xi,\eta}}
\def\dxZ{\delta Z^{x,\xi,\eta}}

\def\gXi{\nabla X^{\xi,x_i}}
\def\gYi{\nabla Y^{\xi,x_i}}
\def\gZi{\nabla Z^{\xi,x_i}}
\def\gXii{\nabla X^{\xi,x_i,\star}}
\def\gYii{\nabla Y^{\xi,x_i,\star}}
\def\gZii{\nabla Z^{\xi,x_i,\star}}
\def\tgXi{\nabla \tilde{X}^{\xi,x_i}}
\def\tgXii{\nabla \tilde{X}^{\xi,x_i,\star}}

\def\gXxi{\nabla_{\mu}X^{x,\xi,x_i}}
\def\gYxi{\nabla_{\mu}Y^{x,\xi,x_i}}
\def\gZxi{\nabla_{\mu}Z^{x,\xi,x_i}}

\def\gXx{\nabla X^{\xi,x}}
\def\gYx{\nabla Y^{\xi,x}}
\def\gZx{\nabla Z^{\xi,x}}
\def\tgXx{\nabla \tilde{X}^{\xi,x}}
\def\tgxX{\nabla \tilde{X}^{x,\xi}}

\def\gXxx{\nabla_{\mu}X^{x,\xi,\tilde{x}}}
\def\gYxx{\nabla_{\mu}Y^{x,\xi,\tilde{x}}}
\def\gZxx{\nabla_{\mu}Z^{x,\xi,\tilde{x}}}
\def\tgXtx{\nabla \tilde{X}^{\xi,\tilde{x}}}
\def\tgtxX{\nabla \tilde{X}^{\tilde{x},\xi}}

at 9pt

\definecolor{alp}{rgb}{0.0, 0.5, 0.0}

\newtheorem{thm}{Theorem}[section]
\newtheorem{lem}[thm]{Lemma}
\newtheorem{cor}[thm]{Corollary}

\newtheorem{rem}[thm]{Remark}

\newtheorem{defn}[thm]{Definition}
\newtheorem{assum}[thm]{Assumption}

\begin{document}

\title{\bf Regularity of the Value Function in Discounted Infinite-Time Mean Field Games  } 
\author{ Yongsheng Song$^*$ and Zeyu Yang\thanks{State Key Laboratory of Mathematical Sciences, Academy of Mathematics and Systems Science, Chinese Academy of Sciences, Beijing 100190, China, and School of Mathematical Sciences, University of Chinese Academy of Sciences, Beijing 100049, China. E-mails: yssong@amss.ac.cn (Y. Song), yangzeyu@amss.ac.cn (Z. Yang).}  }
\date{\today}

\maketitle

\begin{abstract} 
In \cite{yang2025discounted}, we introduced the discounted infinite-time  mean field games. Subsequently, in \cite{song2025infinite}, 
we studied the connection between infinite-time mean field FBSDEs
and elliptic master equations. In this paper,
 we further investigate the regularity of the representative player's value function.
Specifically, we first prove the strong existence and uniqueness, as well as the uniqueness in law, for an extended 
class of infinite-time FBSDEs. We then establish the Lions-differentiability 
for the derivative of the representative player's value function with respect to the measure argument, 
and provide an explicit characterization for it using solutions to FBSDEs. 
\end{abstract}

\no{\bf Keywords.}  discounted infinite-time 
  mean field games, infinite-time FBSDEs, Lions-derivative

\vfill\eject


\section{Introduction}
\label{sect-Introduction}
\setcounter{equation}{0} 

The study of mean field games was initiated independently by Lasry-Lions
\cite{lasry2006jeux,lasry2006mean,lasry2007mean} and Huang-Malhamé-Caines \cite{huang2006large}, which is an analysis
of limit models for symmetric weakly interacting $(N+1)-$player differential games.
We refer the
reader to \cite{carmona2018probabilistic,cardaliaguet2019master,gangbo2022mean} for a comprehensive exposition on the subject.
Forward-backward stochastic differential equations (FBSDEs) serve as a powerful tool for the study of mean field games.
The investigation of general nonlinear BSDEs was pioneered by Pardoux and Peng \cite{pardoux1990adapted,pardoux2005backward} in the
early 1990s.
\cite{shi2020forward} studied the infinite-time FBSDEs  and established connections with quasilinear elliptic PDEs.
Recently,  \cite{ bayraktar2023solvability} extended this framework to 
the McKean-Vlasov FBSDEs.
In this paper, we establish the strong existence and uniqueness, as well as uniqueness in law, 
for a broader class of infinite-time FBSDEs, and employ it to prove the Lions-differentiability of the 
value function's derivative for the representative player in the discounted infinite-time  mean field games.

In the recent work \cite{yang2025discounted}, we proposed the discounted infinite-time  mean field games, 
which extends the traditional framework to infinite-time case.
Within this framework, we introduce the following two systems of infinite-time FBSDEs: 
\begin{equation}
  \label{eq: 11}
  \begin{cases}
    \rmd X_{t}^{\xi} = \partial_yH(X_{t}^{\xi},\mathcal{L} _{X_{t}^{\xi}},Y_{t}^{\xi}) \rmd t +\rmd B_{t}, \\
    \rmd Y_{t}^{\xi} = -\left[\partial_x {H} (X_{t}^{\xi},\mathcal{L} _{X_{t}^{\xi}}, Y_{t}^{\xi})-rY_{t}^{\xi}\right] \rmd t + Z_{t}^{\xi}\rmd B_{t}, \\
    X_0^{\xi}=\xi,
    \end{cases}
\end{equation}
\begin{equation}
  \label{eq: 12}
  \begin{cases}
    \rmd X_{t}^{x,\xi} = \partial_yH(X_{t}^{x,\xi},\mathcal{L} _{X_{t}^{\xi}},Y_{t}^{x,\xi}) \rmd t +\rmd B_{t}, \\
    \rmd Y_{t}^{x,\xi} = -\left[\partial_x {H} (X_{t}^{x,\xi},\mathcal{L} _{X_{t}^{\xi}}, Y_{t}^{x,\xi})-rY_{t}^{x,\xi}\right] \rmd t + Z_{t}^{x}\rmd B_{t}, \\
    X_0^x=x.
    \end{cases}
\end{equation}
Here $r>0$ is the discount factor,
\begin{equation}
  H(x,\mu,y)\triangleq \min_{a\in \dbR} \left[ b(x,\mu,a)\cdot y+f(x,\mu,a) \right], 
\end{equation}
and denote by $\hat{\alpha}(x,\mu,y)$  the unique minimizer.
The process $X^{\xi} $ represents the state process of the social equilibrium, 
while  $X^{x,\xi} $ denotes the state process of the representative player with initial state $x$.
In \cite{song2025infinite},  we define the value function
\begin{equation}
  \label{eq: value}
V(x,\mu)\triangleq \mathbb{E}\bigg[\int_{0}^{+\infty}\rme^{-rt}f\big(X_{t}^{x,\xi},\mathcal{L}_{X_{t}^{\xi}},
\hat{\alpha}(X_{t}^{x,\xi},\mathcal{L}_{X_{t}^{\xi}}, Y_{t}^{x,\xi})        \big)\rmd t\bigg]
\end{equation}
of the representative player where $\cL_\xi=\mu$.
Under certain conditions, we prove that $V(x,\mu)$
is the viscosity solution to the elliptic master equation:
\begin{equation}
\label{eq: master}
\begin{split}
  r U(x,\mu)=&H(x,\mu,\partial_xU(x,\mu))+\frac{1}{2}\partial_{xx}U(x,\mu)\\&+\tilde{\mathbb{E} }
  \left[\frac{1}{2}\partial_{\tilde{x}}\partial_\mu U(x,\mu,\tilde{\xi })+\partial_\mu U(x,\mu,\tilde{\xi })
  \partial_y H(\tilde{\xi },\mu,\partial_xU(\tilde{\xi},\mu))\right].
\end{split}
\end{equation}
Here $\partial_x,\partial_{xx}$ are standard spatial derivatives, $\partial_{\mu},\partial_{\tilde{x}\mu}$ are $\cW_2$-Wasserstein derivatives, $\tilde \xi$ is a random variable with law $\mu$ and $\tilde \dbE$ is the expectation with respect to its law. 
And we have the relationship
\begin{equation}
  Y_0^{x,\xi}=\pa_x V(x,\mu).
\end{equation}
In \cite{yang2025discounted}, we define $\mcv(x,\mu)\triangleq Y_0^{x,\xi}$, and prove that it's the viscosity solution to
\begin{equation}
  \label{eq: intro-pa}
  \begin{split}
  r \mcu(x,\mu)=&\partial_xH(x,\mu,\mcu(x,\mu))+ \partial_yH(x,\mu,\mcu(x,\mu))\cdot \partial_x\mcu(x,\mu) +
  \frac{1}{2}\partial_{xx}\mcu(x,\mu)\\&+\tilde{\mathbb{E} }
  \left[\frac{1}{2}\partial_{\tilde{x}}\partial_\mu \mcu(x,\mu,\tilde{\xi })+\partial_\mu \mcu(x,\mu,\tilde{\xi })
  \partial_y H(\tilde{\xi },\mu,\mcu(\tilde{\xi},\mu))\right].
\end{split}
\end{equation}

In this paper, we prove that $\mcv(x,\mu)$ is Lions-differentiable with respect to the measure $\mu$
and derive the explicit form of its Lions-derivative.
In the finite-time mean field games\cite{carmona2018probabilistic}, the Lions-differentiability 
of the value function and its derivative  follow directly from the 
analysis of finite-time FBSDEs.  Explicit expressions for these Lions-derivatives are 
provided by Mou et al. \cite{mou2024wellposedness} and Gangbo et al. \cite{gangbo2022mean} using FBSDEs. However, applying this conventional methodology 
to our setting of infinite-time FBSDEs introduces significant challenges and fundamental differences.
Specifically, the directional derivative of FBSDEs (\ref{eq: 11}) and (\ref{eq: 12}) with respect to $\xi$ 
along the direction $\eta$
can be represented by the following FBSDEs:
\begin{equation}
  \begin{cases}
    \begin{aligned}
      \rmd \dX_t=&\left\{\dX_t \partial_{xy}H(X_{t}^{\xi},\mathcal{L} _{X_{t}^{\xi}},Y_{t}^{\xi}) 
                        +\dY_t \partial_{yy}H(X_{t}^{\xi},\mathcal{L} _{X_{t}^{\xi}},Y_{t}^{\xi}) \right.\\
          &\left.+\tilde{\dbE}_{\cF_t} \left[
            \partial_{y\mu}H(X_{t}^{\xi},\mathcal{L} _{X_{t}^{\xi}},Y_{t}^{\xi},\tilde{X}_{t}^{\xi}) \tdX_t
          \right]\right\}\rmd t,
    \end{aligned}\\
    \begin{aligned}
      \rmd \dY_t=-&\left\{\dX_t \partial_{xx}H(X_{t}^{\xi},\mathcal{L} _{X_{t}^{\xi}},Y_{t}^{\xi}) 
                        +\dY_t \partial_{xy}H(X_{t}^{\xi},\mathcal{L} _{X_{t}^{\xi}},Y_{t}^{\xi})-r\dY_t
                         \right.\\
          &\left.+\tilde{\dbE}_{\cF_t} \left[
            \partial_{x\mu}H(X_{t}^{\xi},\mathcal{L} _{X_{t}^{\xi}},Y_{t}^{\xi},\tilde{X}_{t}^{\xi}) \tdX_t
          \right]\right\}\rmd t +  \dZ_t\rmd B_t ,
    \end{aligned}\\
    \dX_0=\eta;
  \end{cases}
\end{equation}

\begin{equation}
  \begin{cases}
    \begin{aligned}
      \rmd \dxX_t=&\left\{\dxX_t \partial_{xy}H(X_{t}^{x,\xi},\mathcal{L} _{X_{t}^{\xi}},Y_{t}^{x,\xi}) 
                        +\dxY_t \partial_{yy}H(X_{t}^{x,\xi},\mathcal{L} _{X_{t}^{\xi}},Y_{t}^{x,\xi}) \right.\\
          &\left.+\tilde{\dbE}_{\cF_t} \left[
            \partial_{y\mu}H(X_{t}^{x,\xi},\mathcal{L} _{X_{t}^{\xi}},Y_{t}^{x,\xi},\tilde{X}_{t}^{\xi}) \tdX_t
          \right]\right\}\rmd t,
    \end{aligned}\\
    \begin{aligned}
      \rmd \dxY_t=-&\left\{\dxX_t \partial_{xx}H(X_{t}^{x,\xi},\mathcal{L} _{X_{t}^{\xi}},Y_{t}^{x,\xi}) 
                        +\dxY_t \partial_{xy}H(X_{t}^{x,\xi},\mathcal{L} _{X_{t}^{\xi}},Y_{t}^{x,\xi})
                         \right.\\
          &\left.-r\dxY_t+\tilde{\dbE}_{\cF_t} \left[
            \partial_{x\mu}H(X_{t}^{x,\xi},\mathcal{L} _{X_{t}^{\xi}},Y_{t}^{x,\xi},\tilde{X}_{t}^{\xi}) \tdX_t
          \right]\right\}\rmd t\\& +  \dxZ_t\rmd B_t ,
    \end{aligned}\\
    \dxX_0=0.
  \end{cases}
\end{equation}
In the conventional framework, the Lions-differentiability 
of $\mcv(x,\mu)$ with respect to $\mu$ can be established by showing that the continuity of 
$\dxY_0$ in $\xi$ is uniform with respect 
to $\eta$, which then allows the classical Lions-derivative analysis to be applied. However, this approach is 
not feasible in our setting.
To overcome this difficulty, we adapt the construction of the Lions-derivative from \cite{gangbo2022mean,mou2024wellposedness} 
to the infinite-time FBSDE framework by introducing the directional derivative $\pa_\mu \mcv(x,\mu,\tilde{x})$ 
of $\mcv(x,\mu)$, 
which satisfies: 
\begin{equation}
  \lim_{\delta\to 0}\frac{1}{\delta}\left\lvert 
\mcv(x,\cL_{\xi+\delta\eta})-\mcv(x,\cL_{\xi})
  \right\rvert =\dbE[\pa_{\mu}\mcv(x,\cL_{\xi},\xi)\cdot\eta].
\end{equation}
Finally, by proving that $\pa_\mu \mcv(x,\mu,\tilde{x})$ is bounded and continuous, 
we demonstrate that $\pa_\mu \mcv(x,\mu,\tilde{x})$ is indeed the Lions-derivative of $\mcv(x,\mu)$.

This paper is organized as follows: in section \ref{sec: preliminaries}, we present the preliminaries of problems in this paper;
in section \ref{sec: solution}, we prove the strong existence and uniqueness, 
as well as the uniqueness in distribution, for a broader class of infinite-time FBSDEs, in preparation for the subsequent analysis;
 in section \ref{sec: diff} we present an explicit construction of the directional derivative of $\mcv(x,\mu)$
 with respect to $\mu$ and 
 ultimately establish the Lions-differentiability of $\mcv(x,\mu)$.

\section{Preliminaries}
\label{sec: preliminaries}
\setcounter{equation}{0}

We will use the filtered probability space $(\Omega, \mathcal{F} ,\mathbb{P},\mathbb{F} )$
endowed
with a Brownian motion $B$.
Its filtration $\mathbb{F} \triangleq (\mathcal{F} _t)_{t\ge 0}$ is
augmented
by all $\mathbb{P}$-null sets and a sufficiently rich sub-$\sigma$-algebra $\mathcal{F}_0$ independent
of $B$, such that it
can support any measure on $ \mathbb{R} $ with finite second moment.

Let $( \O', \mathcal{F}' , \dbP',\dbF')$ be a copy of the filtered probability space $(\Omega, \mathcal{F} ,\mathbb{P},\mathbb{F})$
 with corresponding Brownian motion $B'$, define the larger filtered probability space by
\begin{equation}
\tilde \O \triangleq \O\times \O' ,\q \tilde{ \mathcal{F}} \triangleq \mathcal{F}\otimes \mathcal{F}'\q \tilde\dbF = \{\tilde \cF_t\}_{t\ge 0} \triangleq \{\cF_t \otimes  \cF'_t\}_{t\ge 0},\q \tilde \dbP \triangleq \dbP\otimes \dbP',\q \tilde \dbE\triangleq \dbE^{\tilde \dbP}.
\end{equation}
Throughout the paper we will use the probability space $(\Omega, \mathcal{F} ,\mathbb{P},\mathbb{F})$. However, when we deal with the distribution-dependent master equation, independent copies of random variables or processes are needed. Then we will tacitly use their extensions to the larger space $(\tilde \O,\tilde{\mathcal{F}},  \tilde \dbP,\tilde \dbF)$.

Let $\cP \triangleq\cP(\dbR)$ be the set of all probability measures on $\mathbb R$ and let $\cP_p(p\ge1)$ denote the set of $\mu\in \cP$ with finite $p$-th moment.
For any sub-$\si$-field $\cG\subset \cF$ and $\mu\in \cP_p$, we define $\dbL^p(\cG)$ to be the set of $\dbR$-valued, $\cG$-measurable, and $p$-integrable random variables $\xi$ , and $\dbL^p(\cG;\mu)$ to be the set of $\xi\in \dbL^p(\cG)$ such that the law $\cL_\xi=\mu$ .
For any $\mu,\nu\in \cP_p$, we define the $\cW_p$--Wasserstein distance between them as follows: 
\beaa
\cW_p(\mu, \nu) \triangleq \inf\Big\{\big(\dbE[|\xi-\eta|^q]\big)^{1/ q}: \mbox{for all $\xi\in \dbL^p(\cF; \mu)$, $\eta\in \dbL^p(\cF; \nu)$}\Big\}.
\eeaa

We introduce the Wasserstein space and differential calculus on Wasserstein space. 
For a $\cW_2$-continuous functions $U: \cP_2 \to \dbR$, its $\cW_2$-Wasserstein derivatives\cite{carmona2018probabilistic}(also called Lions-derivative), takes the form $\pa_\mu U: (\mu,\tilde x)\in \cP_2\times \dbR\to \dbR$ and satisfies: 
\bea
\label{pamu}
U(\cL_{\xi +  \eta}) - U(\mu) = \dbE\big[\langle \pa_\mu U(\mu, \xi), \eta \rangle \big] + o(\|\eta\|_2), \ \forall\ \xi\in\mathbb L^2(\mathcal{F};\mu),\eta\in\mathbb L^2(\mathcal{F}).
\eea
Let $C^0(\cP_2)$ denote the set of $\cW_2$-continuous functions $U:\cP_2\to\dbR$.
For $C^1(\cP_2)$, we mean the space of functions $U\in C^0(\cP_2)$ such that $\pa_\mu U$ exists and is continuous on $ \cP_2\times \dbR$, which is uniquely determined by \reff{pamu}.
Let $C^{2,1}(\dbR\times\cP_2)$ denote the set of continuous functions $U:\dbR\times\cP_2\to\dbR$ such that $\pa_xU,\pa_{xx}U$ exist and are joint continuous on $\dbR\times \cP_2$, $\pa_\mu U,\pa_{x\mu}U,\pa_{\tilde x\mu}U$ exist and are continuous on $\dbR\times\cP_2\times\dbR$.

\section{Solutions to infinite-time FBSDEs}
\label{sec: solution}
\setcounter{equation}{0}

\subsection{Strong solutions to infinite-time FBSDEs}

For the needs of subsequent problems, we aim to establish a more general theorem on the existence and 
uniqueness of solutions for infinite-time FBSDEs.
Consider the following form of infinite-time FBSDEs:
\begin{equation}
  \label{eq: fbsde}
  \begin{cases}
    \rmd X_t=G(t,\omega,X_t,Y_t,\cL_{ (X_t,A_t)})\rmd t+\sigma \rmd B_t,\\
        \rmd Y_t=-F(t, \omega,X_t,Y_t,\mathcal{L}_{(X_t,A_t)})\rmd t+Z_t\rmd B_t, \\
    X_0=\xi,
  \end{cases}
\end{equation}
where $G,F:\dbR_+\times \Omega\times \dbR\times \dbR\times\cP_2(\dbR^{2})\rightarrow \dbR$ 
are two progressively measurable functions, $A_t$ is a given adapted square integrable process,
$\sigma\in\dbR$ is a constant
and $\xi$ is an $\cF_0-$measurable square integrable random variable.
For any $(v_t)\in L_K^2$, we define the exponentially weighted $L^2$ norm
\begin{equation}
  \Vert v\Vert _K^2\triangleq \dbE\left[\int_0^\infty \rme^{-Kt}|v_t|^2\rmd t \right].
\end{equation}
For simplicity, we only solve (\ref{eq: fbsde}) for one dimensional $(X_t, Y_t, Z_t)$
and starting time $t_0=0$
, but our result can
be easily generalized to multidimensional case and arbitrary starting time $t_0>0$.
The key idea of our proof follows \cite{bayraktar2023solvability,shi2020forward}.

\begin{assum}
  \label{assum: fbsde}
Assume that for some constant $K$, the functions $F$ and $G$ satisfy:

\noindent (i) For any $L_K^2$ processes $(X_t,Y_t)$, $G(t,\omega,X_t,Y_t,\cL_{(X_t,A_t)})$ and 
$F(t,\omega,X_t,Y_t,\cL_{(X_t,A_t)})$ belong to $L_K^2$.

\noindent  (ii) There exists a positive constant $\ell$ such that for any $x,x',y,y'\in \mathbb{R},$
 and any square
integrable random variables $X,X',A$
  \begin{equation}
    \begin{aligned}
      &|G(t,\omega,x,y,\cL_{ (X,A)})  -G(t,\omega,x^{\prime},y^{\prime},\cL_{ (X',A)})|+
      |F(t,\omega,x,y,\cL_{ (X,A)})-F(t,\omega,x^{\prime},y^{\prime},\cL_{ (X',A)})| \\
        \leq& \ell(|x-x^{\prime}|+|y-y^{\prime}|+ \dbE[|X-X'|^2]^{\frac{1}{2}}     )
        .\quad\mathrm{a.s.}
      \end{aligned}
  \end{equation}

\noindent (iii) There exists a constant $\kappa>K/2$, such that for any $t\geq0$ and any square
integrable random variables $X,X',Y,Y',A$,
\begin{equation}
  \label{eq: condition}
  \begin{aligned}
    &\mathbb{E}\left[-K\hat{X}\hat{Y}-\hat{X}(F(t, \omega,U)-F(t,\omega,U^{\prime}))+\hat{Y}(G(t,\omega,U)-G(t,\omega,U^{\prime}))\right] \\
      &\leq-\kappa\mathbb{E}\left[\hat{X}^{2}+\hat{Y}^{2}\right],
    \end{aligned}
\end{equation}

where $ \hat{X}\triangleq X-X^{\prime},\hat{Y}\triangleq Y-Y^{\prime}$ and $U\triangleq (X,Y,\mathcal{L}_{(X,A)}),U'\triangleq(X',Y',\mathcal{L}_{(X',A)}) .$
\end{assum}

\begin{thm}
\label{thm: fbsde}
Under Assumption \ref{assum: fbsde}, for each $\cF_0$-measurable square integrable random 
variable $\xi$ , (\ref{eq: fbsde}) has a unique solution $(X_t,Y_t,Z_t)$ in $L_K^2$.
\end{thm}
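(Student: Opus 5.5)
The plan follows the monotonicity/continuation method of \cite{shi2020forward,bayraktar2023solvability}, adapted to the weighted space $L_K^2$.

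\textbf{Uniqueness.} Let $(X,Y,Z)$ and $(X',Y',Z')$ be two solutions in $L_K^2$ with the same initial value $\xi$, and set $\hat X=X-X'$, $\hat Y=Y-Y'$. Apply It\^o's formula to $\rme^{-Kt}\hat X_t\hat Y_t$; since $\sigma$ is constant, the martingale parts in $\hat X$ cancel. This gives
\[
\rmd\big(\rme^{-Kt}\hat X_t\hat Y_t\big)=\rme^{-Kt}\Big[-K\hat X_t\hat Y_t-\hat X_t\big(F(U_t)-F(U_t')\big)+\hat Y_t\big(G(U_t)-G(U_t')\big)\Big]\rmd t+\rme^{-Kt}\hat X_t\hat Z_t\,\rmd B_t .
\]
Take expectations on $[0,T]$ and use $\hat X_0=0$ together with (iii). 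We obtain
\[
\dbE\big[\rme^{-KT}\hat X_T\hat Y_T\big]\le -\kappa\int_0^T \rme^{-Kt}\,\dbE\big[\hat X_t^2+\hat Y_t^2\big]\rmd t .
\]
Membership in $L_K^2$ gives $\int_0^\infty \rme^{-Kt}\dbE[\hat X_t^2+\hat Y_t^2]\rmd t<\infty$, so along some sequence $T_n\to\infty$ we have $\rme^{-KT_n}\dbE[|\hat X_{T_n}\hat Y_{T_n}|]\to0$. Letting $n\to\infty$ forces $\hat X=\hat Y=0$. Uniqueness of $Z$ then follows from the BSDE dynamics via It\^o isometry on each finite interval.

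\textbf{Existence by continuation in a parameter $\lambda\in[0,1]$.} Consider the family
\[
\rmd X=\big[\lambda G(U)-(1-\lambda)\kappa_0 Y+\phi_t\big]\rmd t+\sigma\rmd B,\qquad \rmd Y=-\big[\lambda F(U)+(1-\lambda)(\kappa_0X-KY)+\psi_t\big]\rmd t+Z\rmd B,
\]
with arbitrary $\phi,\psi\in L_K^2$. The linear part $(-\kappa_0 Y,\ \kappa_0X-KY)$ is chosen so that it satisfies (iii) with constant $\kappa_0$: it contributes exactly $-\kappa_0(\hat X^2+\hat Y^2)$ to the left side of (\ref{eq: condition}). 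Taking $\kappa_0=\kappa$ makes every convex combination satisfy (iii) uniformly in $\lambda$.

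\emph{Step 1: $\lambda=0$.} The system is linear and decoupled after a rotation. Here I would solve it explicitly: the forward–backward linear system with constant coefficients has a Riccati-type decoupling $Y=pX+q$ with an explicit constant $p$. Alternatively, I would do a direct fixed point on the forward SDE and the infinite-horizon BSDE in $L_K^2$; a BSDE with drift coefficient $K$ is solvable in $L_K^2$ by the standard discounted argument.

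\emph{Step 2: a priori estimate.} Apply the It\^o computation of the uniqueness step to the difference of two solutions, with data $(\xi,\phi,\psi)$ and $(\xi',\phi',\psi')$. Combining (iii) with Young's inequality should give
\[
\|\hat X\|_K^2+\|\hat Y\|_K^2\le C\big(\dbE|\hat\xi|^2+\|\hat\phi\|_K^2+\|\hat\psi\|_K^2\big),
\]
with $C$ independent of $\lambda$. The boundary term at $0$, namely $\dbE[\hat\xi\hat Y_0]$, needs a separate bound on $\dbE|\hat Y_0|^2$. I would get it from the forward/backward equations by an It\^o estimate on $\rme^{-Kt}\hat Y^2$ and $\rme^{-Kt}\hat X^2$ separately, using the Lipschitz condition (ii). This is where $\kappa>K/2$ is used: it guarantees that the separate weighted $L^2$ estimates for the forward SDE and the BSDE close.

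\emph{Step 3: extension of the solvable range.} Suppose the problem is solvable for $\lambda_0$ and all $\phi,\psi$. For $\lambda_0+\delta$, define a map on $L_K^2$ by freezing $(x,y)$ in the increments $\delta\big(G(u)+\kappa y\big)$ and $\delta\big(F(u)-\kappa x+Ky\big)$, treating them as new $\phi,\psi$, and then solving at $\lambda_0$. By (ii) and Step 2 this map is a contraction once $\delta<\delta_0$, with $\delta_0$ depending only on $\ell,\kappa,K$. After finitely many steps we reach $\lambda=1$, which gives existence. Here (i) ensures that the map sends $L_K^2$ to itself; the measure dependence through $\cL_{(X,A)}$ is handled by the $\dbE[|X-X'|^2]^{1/2}$ term in (ii).

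\textbf{Main obstacle.} I expect the $\lambda$-uniform a priori estimate in Step 2 to be the hardest part. Monotonicity alone controls only $\|\hat X\|_K^2+\|\hat Y\|_K^2$ up to the cross term $\dbE[\hat\xi\hat Y_0]$ and the forcing terms. Closing it requires estimating $\hat Y_0$, and possibly $\hat Z$, via separate weighted BSDE estimates, while keeping all constants free of $\lambda$ and handling the infinite-horizon limits through the subsequence argument used for uniqueness.
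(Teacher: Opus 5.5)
Your proposal is correct and follows the same monotonicity/continuation scheme as the paper: It\^o's formula for $\rme^{-Kt}\hat X_t\hat Y_t$, a sequence $T_i\to\infty$ that kills the boundary term, and a contraction in $L_K^2$ with a step $\delta_0$ independent of $\lambda_0$. The genuine difference is the $\lambda=0$ endpoint. The paper deforms towards $(G,F)=(-\kappa Y,\kappa X)$, which satisfies (iii) only with constant $\kappa-K/2$ because of the leftover term $-(1-\lambda_0)K\hat X\hat Y$. Its $\lambda=0$ system, after the decoupling $Y=X+q$, also leaves a BSDE for $q$ with rate $\kappa$. Both points are where the paper spends the hypothesis $\kappa>K/2$, and in return it can quote Lemma 2.1 of \cite{bayraktar2023solvability} at $\lambda=0$. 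Your endpoint $(-\kappa Y,\kappa X-KY)$ satisfies (iii) with constant exactly $\kappa$, so every $\lambda$-system is monotone with the same constant. The price is that you must solve $\lambda=0$ yourself, and your explicit decoupling does this: $Y=pX+q$, with $p>0$ solving $\kappa p^2+Kp-\kappa=0$, leaves a BSDE for $q$ with rate $(K+\sqrt{K^2+4\kappa^2})/2>K/2$ and a mean-reverting SDE for $X$. You should drop the ``direct fixed point'' alternative, since the Picard map $y\mapsto Y$ there has operator norm $(2\kappa/K)^2$ on $L_K^2$, which exceeds $1$ when $\kappa>K/2$. Two parts of your self-assessment are also off. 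First, Step 2 is not an obstacle. The fixed-point map only compares solutions of the $\lambda_0$-system with the \emph{same} $\xi$ (the frozen inputs enter only through $\phi,\psi$), so $\hat X_0=0$ and no term $\dbE[\hat\xi\hat Y_0]$ appears. Monotonicity alone then gives $\kappa\|(\hat X,\hat Y)\|_K\le\|(\hat\phi,\hat\psi)\|_K$, which is all the paper uses; no bound on $\hat Y_0$ or $\hat Z$ is needed for existence. Second, $\kappa>K/2$ is not what closes separate forward and backward estimates. In your route it is never used: only $\kappa>0$ matters, together with $K>0$, which the setting requires anyway. So your choice of endpoint actually yields the theorem under a weaker hypothesis than the paper's.
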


\proof
\noindent First, we prove the uniqueness.
Suppose there exist two solutions $(X_t,Y_t,Z_t)$, $(X_t',Y_t',Z_t')$ in $L_K^2$
 to (\ref{eq: fbsde}), and denote 
 \begin{equation}
  \hat{X}\triangleq X-X'\quad \hat{Y}\triangleq Y-Y'\quad \hat{Z}\triangleq Z-Z'.
 \end{equation}
We choose a sequence of $T_i\to \infty$ such that 
\begin{equation}
  \dbE\left[\rme^{-KT_i}\hat{X}_{T_i}\hat{Y}_{T_i}\right]\rightarrow 0.
\end{equation}
Applying It\^{o}'s formula to $\rme^{-Kt}\hat{X}_{t}\hat{Y}_{t}$, we get that
\begin{equation}\begin{aligned}
 & \mathbb{E}\left[\rme^{-KT_{i}}\hat{X}_{T_{i}}\hat{Y}_{T_{i}}\right] \\
  =&\mathbb{E}\left[\int_{0}^{T_{i}}\rme^{-Kt}
  \left(-K\hat{X}_{t}\hat{Y}_{t}-\hat{X}_{t}(F(t,\omega,X_t,Y_t,\cL_{(X_t,A_t)})-F(t,\omega,X_t',Y_t',\cL_{(X_t',A_t)})) \right.\right.\\
 &\left.\left. +\hat{Y}_{t}(G(t,\omega,X_t,Y_t,\cL_{(X_t,A_t)})-G(t,\omega,X_t',Y_t',\cL_{(X_t',A_t)}))\right)dt\right] \\
  \leq& -\kappa\mathbb{E}\left[\int_0^{T_i}e^{-Kt}\left(\hat{X}_t^2+\hat{Y}_t^2\right)\rmd t\right].
\end{aligned}\end{equation}
Letting $T_i\to \infty$, we get that 
\begin{equation}
  \|\hat{X}\|_K^2=\|\hat{Y}\|_K^2=0,
\end{equation}
and hence we complete the proof of the uniqueness.

Next, we prove the existence of solutions, for this purpose, we use the continuity method.
We study the following family of infinite-time FBSDEs parametrized
by $\lambda\in [0,1]$,
\begin{equation}
  \label{eq: lambda}
  \begin{cases}
    \begin{aligned}
  \rmd X_{t}^{\lambda}=&\left[ \lambda G(t,\omega, X_{t}^{\lambda},Y_{t}^{\lambda},\mathcal{L}_{(X_{t}^{\lambda},A_t)}) \right.
       \left.  -\kappa(1-\lambda)Y_{t}^{\lambda}+\phi_t(\omega) \right]\rmd t+\sigma \rmd B_{t}, 
\end{aligned}\\
\begin{aligned}
  \rmd Y_{t}^{\lambda}=&-\left[\lambda F(t,\omega,X_{t}^{\lambda},Y_{t}^{\lambda},\mathcal{L}_{(X_{t}^{\lambda},A_t)})\right.
   \left. +\kappa(1-\lambda)X_{t}^{\lambda}+\psi_t(\omega)\right]\rmd t+Z_{t}^{\lambda}\rmd B_{t}, 
\end{aligned}\\
\begin{aligned}
  & X_{0}^{\lambda}=\xi,\quad (X_{t}^{\lambda},Y_{t}^{\lambda},Z_{t}^{\lambda})\in L_{K}^{2},
\end{aligned}
  \end{cases}
\end{equation}
where $\phi,\psi$ are two arbitrary processes in $L_K^2$.
Note that when $\lambda = 1, \phi =\psi =0$, (\ref{eq: lambda})
becomes (\ref{eq: fbsde}), and when $\lambda = 0$, (\ref{eq: lambda}) becomes
\begin{equation}
  \label{eq: lis0}
  \begin{cases}
 & \rmd X_t^0=(-\kappa Y_t^0+\phi_t(\omega))\rmd t+\sigma \rmd B_t, \\
 & \rmd Y_t^0=-(\kappa X_t^0+\psi_t(\omega))\rmd t+Z_t^0 \rmd B_t, \\
 & X_0^0=\xi.  
\end{cases}\end{equation}
It has been proved in (\cite{bayraktar2023solvability}, Lemma 2.1) that (\ref{eq: lis0}) has unique solution $(X^0,Y^0,Z^0)\in L_K^2$.

Now, suppose for some $\lambda_0\in[0,1)$, we have that 
for any $\cF_0$-measurable square integrable
random variable $\xi$ and $\phi,\psi \in L^2_K$,
 (\ref{eq: lambda}) has a unique solution $(X^{\lambda_0} , Y^{\lambda_0} , Z^{\lambda_0} )$
in $L^2_K$. We  try to find a constant $\delta_0$, such that for any $\delta\in[0,\delta_0]$,
the FBSDE(\ref{eq: lambda}) has a unique solution $(X^{\lambda_0+\delta} , Y^{\lambda_0+\delta} , Z^{\lambda_0+\delta} )$
in $L^2_K$ for any given $\xi,\phi,\psi$.
To do this, we consider the following FBSDE:
\begin{equation}
\label{eq: contra}
\begin{cases}
\begin{aligned}
\rmd X_t ={} & \bigl[\lambda_0 G(t,\omega,X_t,Y_t,\mathcal{L}_{(X_{t},A_t)}) - \kappa(1-\lambda_0)Y_t \\
         & + \delta \bigl(G(t,\omega,x_t,y_t,\mathcal{L}_{(x_{t},A_t)}) + \kappa y_t \bigr) + \phi_t(\omega) \bigr]\rmd t + \sigma \rmd B_t, \\
\rmd Y_t ={} & -\bigl[\lambda_0 F(t,\omega,X_t,Y_t,\mathcal{L}_{(X_{t},A_t)}) + \kappa(1-\lambda_0)X_t \\
         & + \delta \bigl(F(t,\omega,x_t,y_t,\mathcal{L}_{(x_{t},A_t)}) - \kappa x_t \bigr) + \psi_t(\omega) \bigr]\rmd t + Z_t \rmd B_t, \\
X_0 ={} & \xi.
\end{aligned}
\end{cases}
\end{equation}
For any pair $(x_t,y_t)$ in $L_K^2$, we have 
$G(t,\omega,x_t,y_t,\mathcal{L}_{(x_{t},A_t)})$ and $F(t,\omega,x_t,y_t,\mathcal{L}_{(x_{t},A_t)}) $
belong to $L_K^2$ according to our hypothesis, then the FBSDE (\ref{eq: contra}) 
admits  a unique solution $(X, Y , Z)$ in $L_K^2$.
We can define a map $\Phi$ through 
\begin{equation}\Phi:(x,y)\mapsto(X,Y).\end{equation}
We then prove that the map $\Phi$ is a contraction on $L_K^2$.

Take another pair $(x_t',y_t')$ in $L_K^2$ and its image $(X_t',Y_t')$. Denote
\begin{equation}
  \begin{aligned}
    &U_t=(X_t,Y_t,\cL_{(X_t,A_t)}),\quad u_t=(x_t,y_t,\cL_{(x_t,A_t)}),\\
    &U_t'=(X_t',Y_t',\cL_{(X_t',A_t)}),\quad u_t=(x_t',y_t',\cL_{(x_t',A_t)}),\\
    &\hat{X}_t=X_t-X_t',\quad \hat{Y}_t=Y_t-Y_t',\\
    &\hat{x}_t=x_t-x_t',\quad \hat{y}_t=y_t-y_t'.
  \end{aligned}
\end{equation}
Applying It\^{o}'s formula to $\rme^{-Kt}\hat{X}_t\hat{Y}_t$, we get that 
\begin{equation}
\begin{aligned}
& \dbE \left[\mathrm{e}^{-KT}\hat{X}_{T}\hat{Y}_{T}\right] \\
& = \lambda_{0}\mathbb{E}\biggl[\int_{0}^{T}\mathrm{e}^{-Kt}\biggl(
      -K\hat{X}_{t}\hat{Y}_{t} - \hat{X}_{t}\bigl(F(t,\omega,U_{t}) - F(t,\omega,U_{t}^{\prime})\bigr) 
       \quad + \hat{Y}_{t}\bigl(G(t,\omega,U_{t}) - G(t,\omega,U_{t}^{\prime})\bigr)
    \biggr)\mathrm{d}t\biggr] \\
& - \kappa(1-\lambda_0)\mathbb{E}\left[\int_0^{T}\mathrm{e}^{-Kt}\left(
      \hat{X}_t^2 + \hat{Y}_t^2
    \right)\mathrm{d}t\right] \\
& - (K-\lambda_0K)\mathbb{E}\left[\int_0^{T}\mathrm{e}^{-Kt}
      \hat{X}_t\hat{Y}_t
    \mathrm{d}t\right] \\
& + \kappa\delta\mathbb{E}\left[\int_0^{T}\mathrm{e}^{-Kt}\left(
      \hat{X}_t\hat{x}_t + \hat{Y}_t\hat{y}_t
    \right)\mathrm{d}t\right] \\
& + \delta\mathbb{E}\biggl[\int_{0}^{T}\mathrm{e}^{-Kt}\biggl(
      -\hat{X}_{t}\bigl(F(t,\omega,u_{t}) - F(t,\omega,u_{t}^{\prime})\bigr) 
       \quad + \hat{Y}_{t}\bigl(G(t,\omega,u_{t}) - G(t,\omega,u_{t}^{\prime})\bigr)
    \biggr)\mathrm{d}t\biggr].
\end{aligned}
\end{equation}
It holds that
\begin{equation}
\begin{aligned}
&\mathbb{E}\biggl[ -K\hat{X}_t\hat{Y}_t 
 - \hat{X}_t\bigl(F(t,\omega,U_t) - F(t,\omega,U_t^{\prime})\bigr) 
 + \hat{Y}_t\bigl(G(t,\omega,U_t) - G(t,\omega,U_t^{\prime})\bigr) \biggr] \\
&\leq -\kappa\mathbb{E}\left[\hat{X}_t^2 + \hat{Y}_t^2\right]
\end{aligned}
\end{equation}
and
\begin{equation}
\begin{aligned}
  &\dbE\left[-\hat{X}_{t}\bigl(F(t,\omega,u_{t}) - F(t,\omega,u_{t}^{\prime})\bigr) + \hat{Y}_{t}\bigl(G(t,\omega,u_{t}) - G(t,\omega,u_{t}^{\prime})\bigr)\right]\\
  \le&\dbE\left[|\hat{X}_{t}|\cdot|F(t,\omega,u_{t}) - F(t,\omega,u_{t}^{\prime})|+ |\hat{Y}_{t}|\cdot|G(t,\omega,u_{t}) - G(t,\omega,u_{t}^{\prime})|\right]\\
 \le& \frac{3\ell}{2}\dbE\left[|\hat{X}_{t}|^2+|\hat{Y}_{t}|^2\right]
 +2\ell\dbE\left[|\hat{x}_{t}|^2+|\hat{y}_{t}|^2\right].
\end{aligned}
\end{equation}
So we have
\begin{equation}
\begin{aligned}
\mathbb{E}\left[\mathrm{e}^{-KT}\hat{X}_{T}\hat{Y}_{T}\right] 
\leq & -\left(\kappa - \frac{K}{2} - \frac{\kappa\delta + 3l\delta}{2}\right)\mathbb{E}\left[\int_{0}^{T}\mathrm{e}^{-Kt}\left(\hat{X}_{t}^{2} + \hat{Y}_{t}^{2}\right)\mathrm{d}t\right] \\
& + \frac{\kappa\delta + 4l\delta}{2}\mathbb{E}\left[\int_0^{T}\mathrm{e}^{-Kt}\left(\hat{x}_t^2 + \hat{y}_t^2\right)\mathrm{d}t\right].
\end{aligned}
\end{equation}
We take 
\begin{equation}
  \delta_0=\frac{2\kappa-K}{3\kappa+11\ell}
\end{equation}
and choose a sequence of $T_i\to \infty$ such that 
\begin{equation}
  \dbE\left[\rme^{-KT_i}\hat{X}_{T_i}\hat{Y}_{T_i}\right]\rightarrow 0.
\end{equation}
For any $\delta\in [0,\delta_0]$, we have
\begin{equation}
  \mathbb{E}\left[\int_{0}^{\infty}\mathrm{e}^{-Kt}\left(\hat{X}_{t}^{2} + \hat{Y}_{t}^{2}\right)\mathrm{d}t\right]
  \le \frac{1}{2} \mathbb{E}\left[\int_0^{\infty}\mathrm{e}^{-Kt}\left(\hat{x}_t^2 + \hat{y}_t^2\right)\mathrm{d}t\right].
\end{equation}
Therefore $\Phi$ is a contraction.

By repeating this procedure
for $[1/\delta_0]$ many times, we conclude that there exists a solution to (\ref{eq: lambda}) with $\lambda = 1$. In
particular,  we get a $L_K^2$ solution to (\ref{eq: fbsde}).
\qed

\subsection{Distributional uniqueness }

At first we denote by $C([0,\infty);\dbR)$ the space of continuous $\dbR$-valued functions on $[0,\infty)$
equipped with the metric of uniform convergence on compacts:
\begin{equation}
  d(\omega^1,\omega^2)=\sum_{n\ge0}2^{-n}\sup_{t\in[0,n]}\min(|\omega_t^1-\omega_t^2|,1).
\end{equation}
Then, let $C_0([0,\infty);\dbR)$
 denote the subspace of $C([0,\infty);\dbR)$ consisting of functions satisfying $\omega(0)=0$, 
 which is endowed with the Wiener measure $\mu_W$ (the law of Brownian motion).
Let $\cB_t=\sigma\{\omega(s),0\le s\le t\}$ and denote by $\bar{\cB}_\infty $
 the completion of $\cB_\infty$
 with respect to $\mu_W$. Then $(C_0([0,\infty);\dbR),\bar{\cB}_\infty,\mu_W)$
 forms a complete probability space, and the filtration $\{\bar{\cB}_t, t\ge 0 \}$satisfies the usual conditions.

Now we consider two functions $\phi,\psi: \dbR\times \dbR^+\times C_0([0,\infty);\dbR)\rightarrow \dbR^m$ which satisfy the following conditions:
\begin{enumerate}
  \item $\phi,\psi$ are $\cB(\dbR)\times \cB(\dbR^+)\times \bar{\cB}_\infty/\cB(\dbR^m)$-measurable;
  \item for any $x\in\dbR, \omega\in C_0([0,\infty);\dbR)$, we have $\phi(x,\cdot,\omega),\psi(x,\cdot,\omega)\in C([0,\infty);\dbR^m)$;
  \item for any $x\in \dbR, t\in\dbR^+$, $\phi(x,t,\cdot)$ and $\psi(x,t,\cdot)$ are $\bar{\cB}_t/\cB(\dbR^m)$-measurable.
\end{enumerate}
Under the above conditions, we make an additional assumption that for any $\eta\in \dbL^2(\cF_0)$, 
the adapted processes $(\phi(\eta,t,B),\psi(\eta,t,B))_{t\ge0 }$   belong to $L_r^2$. 
Indeed, we will subsequently prove that all the solutions to the FBSDEs admit this representation.

We attempt to prove the Yamada-Watanabe theorem for a broader class of  FBSDEs, 
stating that pathwise uniqueness implies uniqueness in distribution.
Consider the FBSDE of the form: 
\begin{equation}
  \label{eq: 6fbsde}
  \begin{cases}
    \rmd X_t=G(t,X_t,Y_t,\mathcal{L}_{(X_t,\phi_t(\eta, B))}, \psi_t(\eta,B) )\rmd t+ \rmd B_t, \\
    \rmd Y_t=-F(t,X_t,Y_t,\mathcal{L}_{(X_t,\phi_t(\eta, B))}, \psi_t(\eta,B))\rmd t+Z_t\rmd B_t,\\
    X_0=\xi.
    \end{cases}
\end{equation}
where $\xi, \eta\in \dbL^2(\cF_0)$ and 
$G, F: \dbR^+\times \dbR\times \dbR\times \cP_2(\dbR^{m+1})\times\dbR \times \dbR^m \rightarrow \dbR$
are two measurable functions.
We then give the definitions of strong and weak uniqueness.

\begin{defn}[Strong uniqueness]
  We say that the strong uniqueness holds for FBSDE (\ref{eq: 6fbsde}) if on any  set-up
  $(\Omega,\cF,\dbP,\dbF)$ with inputs $(\xi,\eta, B)$, for any two $\dbF$-progressively measurable $L^2_r$
  three-tuples
  \begin{equation}
    (X^1_t,Y^1_t,Z^1_t)_{t\ge 0},\quad (X^2_t,Y^2_t,Z^2_t)_{t\ge 0}
  \end{equation}
  satisfying the FBSDE (\ref{eq: 6fbsde}) with the same initial condition $(\xi,\eta)$ (up to an exceptional event),
  it holds that
  \begin{equation}
    \dbE\left[\int_0^\infty \rme^{-rt} \left( |X^1_t-X^2_t|^2+
     |Y^1_t-Y^2_t|^2 +|Z^1_t-Z^2_t|^2 \right)\rmd t\right]=0
  \end{equation}
\end{defn}

\begin{defn}[Weak uniqueness]
  For any two set-ups $(\Omega^1, \mathcal{F}^1 ,\mathbb{P}^1,\mathbb{F}^1 )$ and $(\Omega^2, \mathcal{F}^2 ,\mathbb{P}^2,\mathbb{F}^2 )$
  with inputs $(\xi^1,\eta^1,B^1)$ and $(\xi^2,\eta^2,B^2)$, where $(\xi^1,\eta^1)$ and 
  $(\xi^2,\eta^2)$ have the same joint law on $\dbR^2$,
  we say the weak uniqueness holds for FBSDE (\ref{eq: 6fbsde}) if for
  the $L^2_r$ solutions $(X_t^1,Y_t^1,Z^1_t)_{t\ge0}$ and $(X_t^2,Y_t^2,Z^2_t)_{t\ge0}$
   on corresponding set-ups, the processes
   $(X_t^1,Y_t^1,\int_0^tZ^1_s\rmd s)_{t\ge0}$ and $(X_t^2,Y_t^2,\int_0^tZ^2_s\rmd s)_{t\ge0}$
  have the same distribution.
\end{defn}

We use the same scheme as the one developed by Yamada and Watanabe to
 prove that path-wise uniqueness of solutions of FBSDE implies uniqueness in the sense of
 probability law.

\begin{thm}
  Assume that on $(\Omega, \mathcal{F} ,\mathbb{P},\mathbb{F} )$ with inputs $(\xi,\eta, B)$, the FBSDE (\ref{eq: 6fbsde})
  has a unique strong solution $(X_t,Y_t,Z_t)_{t\ge 0}$.
  Then the law of $(X_t,Y_t,\int_0^tZ_s\rmd s)_{t\ge 0}$ only depends on $\mathcal{L}_{(\xi,\eta)}$.
  Moreover, there exists a measurable function $\Phi:\dbR^2\times \dbR^+\times C_0([0,\infty);\dbR)\rightarrow \dbR^3$,
  satisfying:
  \begin{enumerate}
    \item $\Phi$ is $\cB(\dbR^2)\times \cB(\dbR^+)\times \bar{\cB}_\infty/\cB(\dbR^3)$-measurable;
  \item for any $(a,b)\in\dbR^2, \omega\in C_0([0,\infty);\dbR)$, we have $\Phi(a,b,\cdot,\omega)\in C([0,\infty);\dbR^3)$; and
  \item for any $(a,b)\in\dbR^2, t\in\dbR^+$, $\Phi(a,b,t,\cdot)$ is $\bar{\cB}_t/\cB(\dbR^3)$-measurable,
  \end{enumerate}
  such that, for any $t\ge 0$, we have $\dbP$ almost surely,
  \begin{equation}
    (X_t,Y_t,\int_0^tZ_s\rmd s)=\Phi(\xi,\eta,t,B).
  \end{equation}
\end{thm}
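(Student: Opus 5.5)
We follow the Yamada--Watanabe scheme, adapted to FBSDEs with mean-field terms. The one new feature is that the coefficients depend on the law $\mathcal{L}_{(X_t,\phi_t(\eta,B))}$. We therefore first freeze the flow of these laws, so that the equation becomes an ordinary (non-McKean) FBSDE driven by the inputs $(\xi,\eta,B)$.

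\textbf{Step 1 (frozen flow).} Let $(X,Y,Z)$ be the unique strong solution on the given set-up, and set $m_t\triangleq\mathcal{L}_{(X_t,\phi_t(\eta,B))}$. For this deterministic flow $(m_t)$, consider the FBSDE obtained from (\ref{eq: 6fbsde}) by replacing the law argument with $m_t$. Its coefficients are then measurable functions of $(t,X_t,Y_t)$ and of $\psi_t(\eta,B)$, which is a progressive functional of the inputs.

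\textbf{Step 2 (coupling).} Take a second set-up carrying a solution $(X',Y',Z')$ with inputs $(\xi',\eta',B')$ such that $\mathcal{L}_{(\xi',\eta')}=\mathcal{L}_{(\xi,\eta)}$. Let $Q$ and $Q'$ denote the joint laws on $\Theta\triangleq\dbR^2\times C_0\times C([0,\infty);\dbR^3)$ of
\[
\bigl(\xi,\eta,B,(X,Y,\textstyle\int_0^\cdot Z_s\rmd s)\bigr)\quad\text{and}\quad\bigl(\xi',\eta',B',(X',Y',\textstyle\int_0^\cdot Z'_s\rmd s)\bigr).
\]
Both have the same marginal on the input space, namely $\mathcal{L}_{(\xi,\eta)}\otimes\mu_W$. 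Disintegrate them with respect to the input, obtaining regular conditional kernels $Q(a,b,\omega;\cdot)$ and $Q'(a,b,\omega;\cdot)$. Then build the product space $\Theta^\ast\triangleq\dbR^2\times C_0\times C\times C$ with the probability measure $P(\rmd a\,\rmd b\,\rmd\omega\,\rmd w\,\rmd w')=Q(\cdot;\rmd w)\,Q'(\cdot;\rmd w')\,\mathcal{L}_{(\xi,\eta)}(\rmd a\,\rmd b)\,\mu_W(\rmd\omega)$, with the filtration generated by the coordinates and then augmented.

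Two things must be checked on this space.
\begin{itemize}
\item[(a)] The canonical $\omega$ is a Brownian motion for the enlarged filtration. This uses the fact that $Q(a,b,\omega;A)$ is $\bar{\cB}_t$-measurable for $A$ in the time-$t$ $\sigma$-field, which follows from adaptedness and the independence of future Brownian increments.
\item[(b)] $Z$ can be recovered from the path of $\int_0^\cdot Z$, via $Z_t=\lim_h h^{-1}\bigl(\int_0^{t+h}Z-\int_0^tZ\bigr)$ a.e. Alternatively, $Z$ can be recovered from $Y$ through the martingale representation, using the quadratic covariation $\rmd\langle Y,B\rangle=Z\,\rmd t$.
\end{itemize}
Given (a) and (b), both coordinate processes solve the frozen FBSDE on this common set-up. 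Because $\mathcal{L}_w=\mathcal{L}_{w'}$ for the marginals, both also have the law flow $m_t$. Hence both solve the original McKean--Vlasov equation with the same inputs and are in $L^2_r$.

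\textbf{Step 3 (conclusion).} By the strong uniqueness assumed in the statement, $w=w'$ $P$-a.s. The standard argument then shows that $Q(a,b,\omega;\cdot)=Q'(a,b,\omega;\cdot)=\delta_{\Phi(a,b,\cdot,\omega)}$: if $w=w'$ a.s. under the product of the two conditional laws, both laws are the same Dirac mass. This yields the measurable map $\Phi$. Property 1 holds by construction. Property 2 holds because the solution paths are continuous. Property 3 follows from the adaptedness of the kernel, possibly after modifying $\Phi$ on a null set using the usual conditions. It also follows that $Q=Q'$, i.e. the law depends only on $\mathcal{L}_{(\xi,\eta)}$.

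There is a subtlety to address here: strong uniqueness is assumed on ``any set-up'', but the law argument in the equation is computed on that set-up. This is why we need the marginal laws of $w$ and $w'$ to coincide. The argument has to be made carefully, since we cannot assume $Q=Q'$ before proving it. To handle this, we first apply the construction to two copies of the \emph{same} solution, taking $Q'=Q$; this is enough to deduce that $Q$ is Dirac and to obtain $\Phi$. Then, for an arbitrary second solution, we show that it equals $\Phi(\xi',\eta',\cdot,B')$ by the same coupling, since the two solutions now share the law flow.

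The main obstacle we expect is Step 2(a): proving that the canonical $\omega$ remains a Brownian motion in the enlarged filtration. This requires a conditional independence property of the kernels under the infinite-horizon metric.
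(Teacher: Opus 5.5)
\textbf{Where the proposal matches the paper, and where it breaks.} Your overall architecture is the one the paper uses: disintegration with respect to the input, the product of the two conditional kernels, the Brownian property of the canonical $\omega$ via measurability of the kernels in the input up to time $t$, recovery of $z$ from $\int_0^\cdot z$, strong uniqueness on the product space, and the Dirac conclusion. Your first stage with $Q'=Q$ is also correct. It shows that the kernel of the given solution is a Dirac mass and produces $\Phi$.

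The gap is in the cross-set-up comparison, which is exactly the first assertion of the theorem (the law depends only on $\mathcal{L}_{(\xi,\eta)}$).
\begin{itemize}
\item In Step~2 you say both coordinate processes solve the FBSDE frozen at $m_t$ ``because $\mathcal{L}_w=\mathcal{L}_{w'}$''. Under $P$ these are the output marginals of $Q$ and $Q'$, so this presupposes $Q=Q'$.
\item Your repair then asserts that, for an arbitrary second solution, ``the two solutions now share the law flow''. Nothing you have proved gives $\mathcal{L}_{(X'_t,\phi_t(\eta',B'))}=m_t$; that equality is the uniqueness in law you are trying to establish.
\item Running your first stage on the second solution only yields a second map $\Phi'$. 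Comparing $\Phi$ with $\Phi'$ leads back to the same issue.
\end{itemize}
As written, this part is circular.

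\textbf{The missing observation.} Neither the freezing in Step~1 nor any a priori agreement of flows is needed. Write $x$ and $x'$ for the first components of $w$ and $w'$. Under $P$ the marginal of $(a,b,\omega,w)$ is $Q$ and that of $(a,b,\omega,w')$ is $Q'$. So the law of $(x_t,\phi_t(b,\omega))$ on the coupled space is $\mathcal{L}_{(X_t,\phi_t(\eta,B))}$, and the law of $(x'_t,\phi_t(b,\omega))$ is $\mathcal{L}_{(X'_t,\phi_t(\eta',B'))}$. Hence each coordinate process satisfies the McKean--Vlasov FBSDE \eqref{eq: 6fbsde} on the common set-up, with its own law in the coefficients, exactly as on its original space.

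Strong uniqueness, as defined, compares any two tuples satisfying \eqref{eq: 6fbsde} on the same set-up with the same inputs, each carrying its own law flow. It does not require the flows to coincide beforehand; they coincide a posteriori. So you can apply it directly with $Q\neq Q'$, get $w=w'$ $P$-a.s., and conclude $Q(a,b,\omega;\cdot)=Q'(a,b,\omega;\cdot)=\delta_{\Phi(a,b,\cdot,\omega)}$. This is how the paper argues, and Step~1 together with the two-stage workaround can then be dropped.

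Equivalently, in your two-stage version, once you check that the transported process $\Phi(\xi',\eta',\cdot,B')$ solves the equation on the second set-up (a statement about joint laws), it and $(X',Y',\int_0^\cdot Z'_s\,\rmd s)$ are two McKean--Vlasov solutions there. Strong uniqueness identifies them even though their flows are not known to agree in advance.
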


\proof

Let us consider  two filtered probability spaces $(\Omega^i, \mathcal{F}^i ,\mathbb{P}^i,\mathbb{F}^i )$
with identically distributed inputs $(\xi^i,\eta^i,B^i)$, $i=1,2$,
on each of which a solution $(X_t^i,Y_t^i,\int_0^tZ^i_s\rmd s)_{t\ge0}$
to the FBSDE (\ref{eq: 6fbsde}) is defined.
Define
\begin{equation}
  \begin{split}
  &\Oin\triangleq\dbR^2\times C_0([0,\infty);\dbR),\\ &\Oout\triangleq C([0,\infty);\dbR)\times C([0,\infty);\dbR)\times C([0,\infty);\dbR),\\
&\Ocan\triangleq \Oin\times \Oout.
  \end{split}
\end{equation}
Denote by $Q^1$ and $Q^2$ the distribution of $(\xi^1,\eta^1, B_t^1,X_t^1,Y_t^1,\int_0^tZ^1_s\rmd s)_{t\ge0} $
and $(\xi^2,\eta^2, B_t^2,X_t^2,Y_t^2,\int_0^tZ^2_s\rmd s)_{t\ge0} $ on $\Ocan=\Oin\times \Oout$,
by $\Qin$ the common distribution of the processes $(\xi^1,\eta^1,B_t^1)$, $(\xi^2,\eta^2, B_t^2)$ on $\Oin$.

Let us now define for $i\in\{1,2\}$,
\begin{equation}
  q^i(\oin;F):\Oin\times \mathcal{B} (\Oout)\rightarrow [0,1]
\end{equation}
 as the regular conditional probability for $\mathcal{B} (\Oout)$ given $\oin\in \Oin$ (under $Q^i$). It satisfies:
\begin{itemize}
  \item $\forall \oin\in \Oin$, $q^i(\oin; \cdot)$ is a probability measure on $(\Oout,\mathcal{B} (\Oout))$.
  \item $\forall F\in \mathcal{B} (\Oout)$, the mapping $\oin \rightarrow q^i(\oin;F)$ is $\cB(\dbR^2)\otimes \cB(C_0([0,\infty),\dbR))$-measurable.
  \item $\forall F\in \cB(\Oout), \forall G\in \cB(\Oin)$:
        \begin{equation}
          Q^i(G\times F)=\int _G q^i(\oin;F)\Qin(\rmd \oin).
        \end{equation}
\end{itemize}

Next, we need a enlarged space $(\Oto, \mathcal{G}, Q )$ to support all processes. Define
\begin{equation}
  \Oto\triangleq \Oin \times\Oout\times\Oout,
\end{equation}
 and $\mathcal{G} $ is the
 completion of the $\sigma$-field $\cB(\Ocan)\otimes \cB(\Oout)$ by the collection $\cN$ of all null sets under the
 probability measure
 \begin{equation}
  Q(G\times F_1\times F_2)=\int_G q^1(\oin;F_1)q^2(\oin;F_2)\Qin(\rmd \oin),
 \end{equation}
where $F_1,F_2\in\cB(\Oout), G\in \cB(\Oin)$.

We observe that $Q(G\times F_1\times\Oout)=Q^1(G\times F_1), Q(G\times \Oout\times F_2)=Q^2(G\times F_2).$
Inparticular, we denote by $(a,b,w,x^1,y^1,\zeta^1,x^2,y^2,\zeta^2)$ the canonical process on $\Oto$,
then $(a,b,w,x^1,y^1,\zeta^1)$ has distribution $Q^1$ and $(a,b,w,x^2,y^2,\zeta^2)$ has distribution $Q^2$.

We define $(z^i_t)_{t\ge0},i\in\{1,2\}$ by
\begin{equation}
  z^i_t=\begin{cases}
 \lim_{n\rightarrow \infty} n\left( \zeta^i_t-\zeta^i_{(t-\frac{1}{n})+}  \right)\quad  ~\mbox{if the limit exists,}
  \\ 0\quad ~\mbox{otherwise.}
   \end{cases}
\end{equation}
Since $\dbP^i$-a.s., $\int_0^t Z^i_s\rmd s$ is absolutely continuous on every finite interval,
 we know $Q^i$-a.s.,$\zeta^i_t$ is absolutely continuous on every finite interval.
So
\begin{equation}
  \zeta^i_t=\int_0^tz^i_s\rmd s,\quad t\ge0.
\end{equation}
Moreover, we have
\begin{equation}
  \begin{split}
  \dbE^Q\int_0^\infty \rme^{-rt} |z^i_t|\rmd t 
  &\le  \liminf_{n\rightarrow \infty}\dbE^Q\int_0^\infty \rme^{-rt}|n(\zeta^i_t-\zeta^i_{(t-\frac{1}{n})+})|^2\rmd t\\
  &=  \liminf_{n\rightarrow \infty}n^2\dbE^{\dbP^i}\int_0^\infty \rme^{-rt}|\int_{(t-\frac{1}{n})+}^tZ_s^i\rmd s |^2\rmd t\\
  &\le \liminf_{n\rightarrow \infty}n\dbE^{\dbP^i}\int_0^\infty \rme^{-rt}\int_{(t-\frac{1}{n})+}^t|Z_s^i|^2\rmd s \rmd t\\
  &\le \dbE^{Q^i}\int_0^\infty \rme^{-rs}|Z_s^i|^2\rmd s,
  \end{split}
\end{equation}
the last inequality following from  Fubini's theorem.

Let us now endow $(\Oto, \cG, Q)$ with the filtration $\dbG$, where
$\dbG=\{\cG_t\}_{t\ge 0}$ is the complete and right-continuous augumentation under $Q$
of the canonical filtration
\begin{equation}
  \cH_t=\{(a,b,w_s,x^1_s,y^1_s,\zeta^1_s,x^2_s,y^2_s,\zeta^2_s);0\le s\le t\}
\end{equation}
on $\Oto$.
It's easy to see that $(a,b)$ are $\cG_0$-measurable and that $(w_s,x^1_s,y^1_s,z^1_s,x^2_s,y^2_s,z^2_s)_{t\ge 0}$
are $\{\cG_t\}_{t\ge 0}$-progressively measurable. Moreover, for $i\in\{1,2\}$,
\begin{equation}
  Q\left\{\omega\in \Oto; (a,b,w,x^i,y^i,\zeta^i)\in A\right\}
  =Q^i\left\{ (\xi^i, \eta^i ,B^i,X^i,Y^i,\int_0^\cdot Z^i_s\rmd s)\in A\right\};\quad A\in\cB(\Ocan).
\end{equation}

Actually, we just have to prove that $(w_t)_{t\ge 0}$ is a $\{\cG_t\}_{t\ge 0 }$ Brownian motion:
we follow the proof given in (\cite{DELARUE2002209}, Remark 1.6).
Let us firstly define
\begin{equation}
  \pi_t: C_0([0,\infty);\dbR)\rightarrow C_0([0,t];\dbR),\quad  h\rightarrow h|_{[0,t]},
\end{equation}
and 
\begin{equation}
  \pi'_t:\gamma\rightarrow \gamma_t,\quad h\rightarrow h|_{[0,t]},
\end{equation}
where $\gamma=\Oout$ and 
\begin{equation}
  \gamma_t=C([0,t];\dbR)\times C([0,t];\dbR)\times C([0,t];\dbR).
\end{equation}
Endowing $C([0,t];\dbR)$ and $\gamma_t$ with their borelian $\sigma$-fields, we define
\begin{equation}
  \cK_t\triangleq \sigma\{\pi_t\},\quad \cK'_t\triangleq \sigma\{\pi'_t\}.
\end{equation}
Using the separability of the spaces $C_0([0,t];\dbR)$ and $\gamma_t$,
we see that
\begin{equation}
  \cK_t=\sigma\{w_s; 0\le s \le t\},
\end{equation}
and that $\forall i\in\{1,2\}, \forall A\in \cK'_t$, the set $\{(X^i,Y^i,\int_0^\cdot Z^i_s\rmd s)\in A \}$ belongs to $\cF^i_t$.

 Now, considering $A\in\cK'_t$ , we want to show that, for $i\in\{1,2\}$, the map
 \begin{equation}
  \Oin\rightarrow [0,1],\quad (a,b,w)\rightarrow q^i(a,b,w;A)
 \end{equation}
is measurable with respect to the completion of the $\sigma$-field $\cB(\dbR^2)\otimes \cK_t$ under the
probability measure $\Qin$, denoted $\overline{\cB(\dbR^2)\otimes \cK_t }$.
Indeed, let us consider $F\in\cB(\dbR^2)$, $G_1\in\cK_t$  and $G_2\in \sigma \{w_s-w_t; s\ge t\}$. Then, $\forall i\in\{1,2\}$
\begin{equation}
\begin{split}
  &\int I_F(a,b)I_{G_1}(w)I_{G_2}(w)q^i(\xi,w;A)\Qin(\rmd \xi \rmd w)\\
=& \dbE^{\dbP^i}\left[I_F(\xi^i,\eta^i)I_{G_1}(B^i)I_{G_2}(B^i)I_A(X^i,Y^i,\int_0^{\cdot}Z_s^i\rmd s)   \right]\\
=& \dbE^{\dbP^i}\left[I_F(\xi^i,\eta^i)I_{G_1}(B^i)I_A(X^i,Y^i,\int_0^{\cdot}Z_s^i\rmd s)   \right]
     \dbE^{\dbP^i}\left[I_{G_2}(B^i)  \right]\\
=&\int I_F(a,b)I_{G_1}(w)q^i(\xi,w;A)\Qin(\rmd \xi \rmd w)\int I_{G_2}(w)\Qin(\rmd \xi \rmd w).
\end{split}
\end{equation}
Hence, using Exercise (17.10) Chapter V of Rogers and Williams \cite{Rogers_Williams_2000},
the map $(a,b,w)\rightarrow q^i(a,b,w;A)$ is measurable with respect to $\overline{\cB(\dbR^2)\otimes \cK_t }$.

Now we prove that $(w_t)_{t\ge 0}$ is a $\{\cG_t\}_{t\ge 0 }$ Brownian motion.
Let us consider $(A,A')\in(\cK'_t)^2, F\in \cB(\dbR^2), G_1\in\cK_t$  and $G_2\in \sigma \{w_s-w_t; s\ge t\}$.
Then,
\begin{equation}
\begin{split}
  &\dbE^Q \left[ I_F(a,b)I_{G_1}(w)I_{G_2}(w)I_A(x^1,y^1,\zeta^1)I_{A'}(x^2,y^2,\zeta^2)   \right]\\
=&\int I_F(a,b)I_{G_1}(w)I_{G_2}(w)q^1(\xi,w;A)q^2(\xi,w;A')\Qin(\rmd \xi \rmd w)\\
=&\int I_F(a,b)I_{G_1}(w)q^1(\xi,w;A)q^2(\xi,w;A')\Qin(\rmd \xi \rmd w)\int I_{G_2}(w)\Qin(\rmd \xi \rmd w)\\
=&\dbE^Q \left[ I_F(a,b)I_{G_1}(w)I_A(x^1,y^1,\zeta^1)I_{A'}(x^2,y^2,\zeta^2)   \right]
   \dbE^Q \left[I_{G_2}(w)\right].
\end{split}
\end{equation}
Noting that $\cH^t=\cB(\dbR^2)\otimes \cK_t\otimes \cK'_t\otimes \cK'_t$, 
we conclude that $(w_t)_{t\ge 0}$ is a $\{\cG_t\}_{t\ge 0 }$ Brownian motion.

At last, applying the same procedure on $(z^i_t)_{t\ge 0}, i\in\{1,2\}$ in (\cite{carmona2018probabilistic}, Volume II, Lemma 1.27), we obtain that $Q-a.s.$: for all $0\le t\le T,i\in\{1,2\}$,
\begin{equation}
  \begin{cases}
    x^i_t=a+\int_0^t G(s,x^i_s,y^i_s,\mathcal{L}_{(x^i_s, \phi_t(b,w))},\psi_t(b,w))ds+  w_t, \\
   y^i_t=y^i_T+\int_t^T F(s,x^i_s,y^i_s,\mathcal{L}_{(x^i_s, \phi_t(b,w))},\psi_t(b,w))ds-\int_t^T z^i_s\rmd w_s,\\
    \dbE^Q\left[\int_0^\infty \rme^{-rt} \left( |x^i_t|^2+
     |y^i_t|^2 +|z^i_t|^2 \right)\rmd t\right]<\infty.
  \end{cases}
\end{equation}
Through the strong uniqueness, we know that under $Q$, 
the processes $(x^1_t,y^1_t,\zeta^1_t)_{t\ge 0}$ and $(x^2_t,y^2_t,\zeta^2_t)_{t\ge 0}$
have the same law.
This implies that, $\Qin$-a.s.,
\begin{equation}
  (q^1(\oin;\cdot)\times q^2(\oin;\cdot))( (x^1_t,y^1_t,\zeta^1_t)_{t\ge 0}=(x^2_t,y^2_t,\zeta^2_t)_{t\ge 0} )=1.
\end{equation}
A product measure that is concentrated on the diagonal set is necessarily a Dirac measure 
 at some point $\Psi(\oin)\in \Oout$, which means, $\Qin$-a.s.,
\begin{equation}
  q^1(\oin;\cdot)=q^2(\oin;\cdot)=\delta_{\Psi(\oin)}.
\end{equation}
Then we have, for any $t\ge 0$, $i\in\{1,2\} $, $\dbP^i$-a.s.,
\begin{equation}
  (X_t^i(\omega),Y_t^i(\omega),\int_0^tZ^i_s(\omega)\rmd s)=\Psi_t(\xi^i(\omega),\eta^i(\omega),B^i_{\cdot}(\omega)).
\end{equation}
Now, we define $\Phi(a,b,t,w)\triangleq \Psi_t(a,b,w)$. 
Then, $\Phi$ is continuous with respect to $t$ and 
$\cB(\dbR^2)\times \bar{\cB}_\infty$-measurable with respect to $(a,b,w)$, 
hence it is $\cB(\dbR^2)\times \cB(\dbR^+)\times \bar{\cB}_\infty$-measurable with respect to $(a,b,t,w)$.
Moreover, since for any $A\in\cK'_t$, $q^1(\oin;A)$ is $\overline{\cB(\dbR^2)\otimes \cK_t }$-measurable, 
then for every $t\ge 0$, $\Psi_t(\oin)$ is $\overline{\cB(\dbR^2)\otimes \cK_t }$-measurable; 
consequently, it clearly follows that
for any $(a,b)\in\dbR^2, t\in\dbR^+$, $\Phi(a,b,t,\cdot)$ is $\bar{\cB}_t/\cB(\dbR^3)$-measurable.
Now we finish the proof.
\qed

\section{Lions-differentiability of the value function's derivative}
\label{sec: diff}
\setcounter{equation}{0}

In this section, for the $L_r^2$ solutions of FBSDEs (\ref{eq: 11}) and (\ref{eq: 12}) with some
  $\xi\in\dbL^2(\cF_0)$,
we  define $\mcv(x,\mu)\triangleq Y_0^{x,\xi}$.
we will prove that $\mcv(x,\mu)$ is
Lions-differentiable with respect to $\mu$. 
To this end, we first establish the existence of the function $\pa_\mu \mcv(x,\mu,\tilde{x}):\dbR\times\cP_2\times\dbR\to\dbR$, 
which satisfies
\begin{equation}
  \lim_{\delta\to 0}\frac{1}{\delta}\left\lvert 
\mcv(x,\cL_{\xi+\delta\eta})-\mcv(x,\cL_{\xi})
  \right\rvert =\dbE[\pa_{\mu}\mcv(x,\cL_{\xi},\xi)\cdot\eta]
\end{equation}
for all $\xi,\eta\in\dbL^2(\cF_0)$.
Referring to the construction of the Lions-derivative for the parabolic master equation in \cite{gangbo2022mean,mou2024wellposedness},
we derive a representation 
of $\pa_\mu\mcv$ via  solutions of FBSDEs,
such that $\pa_\mu \mcv(x,\mu,\tilde{x})$ is uniformly bounded and jointly continuous.
Consequently, $\pa_\mu \mcv(x,\cL_\xi,\xi)$ becomes continuous in $\xi$ with respect to the $\dbL^2$-norm, 
which ensures the Lions-differentiability of $\mcv$ with respect to the measure $\mu$; the derivative
$\pa_\mu \mcv(x,\mu,\tilde{x})$
is then identified as the Lions-derivative.

We require the following stronger conditions in this section,
and it is straightforward to verify that they subsume the Assumption 5.1 in \cite{song2025infinite}.
Therefore both FBSDEs (\ref{eq: 11}) and (\ref{eq: 12}) admit unique $L_r^2$ solutions.
\begin{assum}
  \label{assum: Hb}

\noindent (i) $H(x,\mu,y)$ has at most quadratic growth.  
$\pa_x H, \pa_y H, \pa_{xx},\pa_{xy}H, \pa_{yy}H, $ \\
$\quad \pa_{\mu}H(x,\mu,y,\tilde{x}),
\pa_{x\mu}H(x,\mu,y,\tilde{x}),\pa_y{\mu}H(x,\mu,y,\tilde{x})$ exist and are Lipschitz continuous.

\noindent (ii) There exist constants $\lambda_1, \lambda_2>0$
such that $-\lambda_1+2\lambda_2 <-r/2$ and
\begin{equation}
\begin{aligned}
& \partial_{yy} H(x, \mu, y) \leq -\lambda_1, 
&\quad& \partial_{xx} H(x, \mu, y) \geq \lambda_1, 
 \\
& \left| \partial_{x\mu} H(x, \mu, y, \tilde{x}) \right| \leq \lambda_2, 
&\quad& \left| \partial_{y\mu} H(x, \mu, y, \tilde{x}) \right| \leq \lambda_2.
\end{aligned}
\end{equation}

\noindent (iii) There exist a constant $\lambda_3>0$ such that
  \begin{equation}
    |\pa_{xx}H(x,\mu,y)|\leq\lambda_3,\quad
    |\pa_{yy}H(x,\mu,y)|\leq \lambda_3\quad |\pa_{xy}H(x,\mu,y)|\leq \lambda_3.
  \end{equation}
\end{assum}

\subsection{Existence of directional derivative}

For arbitrary $\xi,\eta\in \dbL^2(\cF_0), x\in\dbR$,  we introduce the following FBSDEs:
\begin{equation}
  \label{eq: dX}
  \begin{cases}
    \begin{aligned}
      \rmd \dX_t=&\left\{\dX_t \partial_{xy}H(X_{t}^{\xi},\mathcal{L} _{X_{t}^{\xi}},Y_{t}^{\xi}) 
                        +\dY_t \partial_{yy}H(X_{t}^{\xi},\mathcal{L} _{X_{t}^{\xi}},Y_{t}^{\xi}) \right.\\
          &\left.+\tilde{\dbE}_{\cF_t} \left[
            \partial_{y\mu}H(X_{t}^{\xi},\mathcal{L} _{X_{t}^{\xi}},Y_{t}^{\xi},\tilde{X}_{t}^{\xi}) \tdX_t
          \right]\right\}\rmd t,
    \end{aligned}\\
    \begin{aligned}
      \rmd \dY_t=-&\left\{\dX_t \partial_{xx}H(X_{t}^{\xi},\mathcal{L} _{X_{t}^{\xi}},Y_{t}^{\xi}) 
                        +\dY_t \partial_{xy}H(X_{t}^{\xi},\mathcal{L} _{X_{t}^{\xi}},Y_{t}^{\xi})-r\dY_t
                         \right.\\
          &\left.+\tilde{\dbE}_{\cF_t} \left[
            \partial_{x\mu}H(X_{t}^{\xi},\mathcal{L} _{X_{t}^{\xi}},Y_{t}^{\xi},\tilde{X}_{t}^{\xi}) \tdX_t
          \right]\right\}\rmd t +  \dZ_t\rmd B_t ,
    \end{aligned}\\
    \dX_0=\eta;
  \end{cases}
\end{equation}

\begin{equation}
  \label{eq: dxX}
  \begin{cases}
    \begin{aligned}
      \rmd \dxX_t=&\left\{\dxX_t \partial_{xy}H(X_{t}^{x,\xi},\mathcal{L} _{X_{t}^{\xi}},Y_{t}^{x,\xi}) 
                        +\dxY_t \partial_{yy}H(X_{t}^{x,\xi},\mathcal{L} _{X_{t}^{\xi}},Y_{t}^{x,\xi}) \right.\\
          &\left.+\tilde{\dbE}_{\cF_t} \left[
            \partial_{y\mu}H(X_{t}^{x,\xi},\mathcal{L} _{X_{t}^{\xi}},Y_{t}^{x,\xi},\tilde{X}_{t}^{\xi}) \tdX_t
          \right]\right\}\rmd t,
    \end{aligned}\\
    \begin{aligned}
      \rmd \dxY_t=-&\left\{\dxX_t \partial_{xx}H(X_{t}^{x,\xi},\mathcal{L} _{X_{t}^{\xi}},Y_{t}^{x,\xi}) 
                        +\dxY_t \partial_{xy}H(X_{t}^{x,\xi},\mathcal{L} _{X_{t}^{\xi}},Y_{t}^{x,\xi})
                         \right.\\
          &\left.-r\dxY_t+\tilde{\dbE}_{\cF_t} \left[
            \partial_{x\mu}H(X_{t}^{x,\xi},\mathcal{L} _{X_{t}^{\xi}},Y_{t}^{x,\xi},\tilde{X}_{t}^{\xi}) \tdX_t
          \right]\right\}\rmd t\\& +  \dxZ_t\rmd B_t ,
    \end{aligned}\\
    \dxX_0=0.
  \end{cases}
\end{equation}
Under Assumption \ref{assum: Hb}, 
it's clear that the above FBSDEs  admit  unique solutions in $L_r^2$.
Similar to Theorem 5.2 in \cite{song2025infinite}, we can prove that
  \begin{equation}
    \begin{aligned}
    &\lim_{\delta\to 0}\left\lVert \frac{1}{\delta} \left(X^{\xi+\delta \eta}-X^{\xi} \right)-\dX \right\rVert _r= 0,\quad
    \quad\;\;\, \lim_{\delta\to 0}\left\lVert \frac{1}{\delta} \left(Y^{\xi+\delta \eta}-Y^{\xi} \right)-\dY \right\rVert _r= 0,\\     
    &\lim_{\delta\to 0}\left\lVert \frac{1}{\delta} \left(X^{x,\xi+\delta \eta}-X^{x,\xi} \right)-\dxX \right\rVert _r= 0,\quad
     \lim_{\delta\to 0}\left\lVert \frac{1}{\delta} \left(Y^{x,\xi+\delta \eta}-Y^{x,\xi} \right)-\dxY \right\rVert _r= 0.     
    \end{aligned}
  \end{equation}
And most importantly,
\begin{equation}
  \lim_{\delta\to 0}\left\lvert \frac{1}{\delta} (Y_0^{x,\xi+\delta \eta}-Y_0^{x,\xi})-\dxY_0\right\rvert =0.
\end{equation}

\begin{lem}
\label{lem: blf}
  Fix $x\in\dbR,\xi\in \dbL^2(\cF_0)$,
  the mapping $\eta\to \dxY_0$ is a bounded linear functional on the Hilbert space $\dbL^2(\cF_0)$.
\end{lem}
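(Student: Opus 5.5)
Linearity and boundedness will be proved separately, both using only the structure of the linear FBSDEs (\ref{eq: dX})--(\ref{eq: dxX}) and their unique solvability in $L_r^2$.

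\textbf{Linearity.} For fixed $(x,\xi)$ the coefficients of (\ref{eq: dX}) and (\ref{eq: dxX}) depend only on the fixed processes $X^\xi,Y^\xi,X^{x,\xi},Y^{x,\xi}$. The equations are therefore linear in the unknowns $(\dX,\dY,\dZ)$ and $(\dxX,\dxY,\dxZ)$, including the conditional expectation term $\tilde\dbE_{\cF_t}[\cdots\tdX_t]$, and the data enter only through the initial value $\eta$. Take $\eta_1,\eta_2\in\dbL^2(\cF_0)$ and $a,b\in\dbR$. Then $a\,\delta X^{\xi,\eta_1}+b\,\delta X^{\xi,\eta_2}$, with the analogous combinations for $Y$ and $Z$, solves (\ref{eq: dX}) with initial value $a\eta_1+b\eta_2$. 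For the independent copy $\tilde{\delta X}$ the same holds, since the copy map is linear. Feeding this into (\ref{eq: dxX}), the combination $a\,\delta Y^{x,\xi,\eta_1}+b\,\delta Y^{x,\xi,\eta_2}$ solves the second system driven by $a\eta_1+b\eta_2$. Uniqueness in $L_r^2$ (Theorem \ref{thm: fbsde}) then gives $\delta Y_0^{x,\xi,a\eta_1+b\eta_2}=a\,\delta Y_0^{x,\xi,\eta_1}+b\,\delta Y_0^{x,\xi,\eta_2}$. Here $\delta Y_0$ is deterministic, because $X_0=x$ and everything is adapted to the Brownian filtration together with the law-dependent input. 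Alternatively one takes its expectation; either way the functional is real-valued.

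\textbf{Boundedness, step 1: the system for $\dX$.} I will apply It\^o's formula to $\rme^{-rt}\dX_t\dY_t$, as in the uniqueness part of Theorem \ref{thm: fbsde}, choosing $T_i\to\infty$ with $\dbE[\rme^{-rT_i}\dX_{T_i}\dY_{T_i}]\to0$. The drift contributes $\rme^{-rt}$ times
\[
\dY_t^2\pa_{yy}H-\dX_t^2\pa_{xx}H+\dY_t\tilde\dbE_{\cF_t}[\pa_{y\mu}H\,\tdX_t]-\dX_t\tilde\dbE_{\cF_t}[\pa_{x\mu}H\,\tdX_t],
\]
because the $\pa_{xy}H$ terms and the $r$ terms cancel. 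By Assumption \ref{assum: Hb}(ii) and Cauchy--Schwarz, using $\dbE|\tilde\dbE_{\cF_t}[\cdot]|^2\le\lambda_2^2\dbE|\dX_t|^2$, the expectation of this is at most $-(\lambda_1-2\lambda_2)\dbE[\dX_t^2+\dY_t^2]$. Hence
\[
-\dbE[\eta\,\dY_0]\le-(\lambda_1-2\lambda_2)\|(\dX,\dY)\|_r^2,
\]
and so $\|(\dX,\dY)\|_r^2\le C\|\eta\|_2\|\dY_0\|_2$.

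\textbf{Step 2: bounding $\dY_0$ in $\dbL^2$.} This is the main obstacle, since the estimate above controls only the weighted $L^2$ norm and not the value at time $0$. I will handle it with a standard BSDE estimate on the $Y$ equation, using the Lipschitz bounds in Assumption \ref{assum: Hb}(iii). Specifically, I apply It\^o's formula to $\rme^{-rt}|\dY_t|^2$ on $[0,T_i]$ and bound $\dbE|\dY_0|^2\le C\|(\dX,\dY)\|_r^2$. The idea is that the discount makes the terminal term vanish along a subsequence, and the cross terms are absorbed by Young's inequality. If the constants do not close directly, I would instead use the linearity of the decoupling field and the monotonicity estimate started at time $0$ with $\dX_0=\eta$, obtaining $|\dbE[\eta\dY_0]|$ and then the norm through duality. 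Combining the two inequalities gives $\|(\dX,\dY)\|_r\le C\|\eta\|_2$.

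\textbf{Step 3: the system for $\dxX$.} I apply It\^o's formula to $\rme^{-rt}\dxX_t\dxY_t$, with $\dxX_0=0$. The left side is $0$, and the drift gives
\[
\dbE\!\int\rme^{-rt}\big(\pa_{yy}H\,\dxY^2-\pa_{xx}H\,\dxX^2+\dxY\,\tilde\dbE[\pa_{y\mu}H\,\tdX]-\dxX\,\tilde\dbE[\pa_{x\mu}H\,\tdX]\big)\,\rmd t .
\]
This yields $\|(\dxX,\dxY)\|_r\le C\|\dX\|_r\le C\|\eta\|_2$. A BSDE estimate as in Step 2 then gives $|\dxY_0|\le C\|\eta\|_2$, which proves boundedness.
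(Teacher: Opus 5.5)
Your proposal is correct and follows the paper's proof essentially step for step. You apply It\^o's formula to $\rme^{-rt}\dX_t\dY_t$, using the monotonicity in Assumption \ref{assum: Hb}(ii), then to $\rme^{-rt}(\dY_t)^2$ to control $\dY_0$ by $\|(\dX,\dY)\|_r$, and repeat both estimates for the $\dxX$ system with $\tdX$ as a source term. The only difference is cosmetic: you close the first two estimates by dividing through rather than with Young's inequality with $\epsilon$. Your fallback in Step 2 is unnecessary, because that bound is homogeneous and needs no smallness of constants.
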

\proof
By the linearity of the FBSDE (\ref{eq: dX}) and (\ref{eq: dxX}), 
it is straightforward to see that the mapping is linear. 
Thus, it suffices to prove its boundedness.
Suppose that $\dbE[\eta^2]\leq 1$. Applying It\^{o}'s formula
to $\rme^{-rt}\dX_t\dY_t$ and integating it from 0 to infinity, we can get that

\begin{equation}
\begin{aligned}
-\dbE[\eta \dY_0] = &\dbE\bigg[ \int_0^\infty \rme^{-rt} \bigg( 
 (\dY_t)^2 \, \partial_{yy}H(X_t^\xi, \mathcal{L}_{X_t^\xi}, Y_t^\xi) 
- (\dX_t)^2 \, \partial_{xx}H(X_t^\xi, \mathcal{L}_{X_t^\xi}, Y_t^\xi) \\
&+ \dY_t \cdot \tilde{\dbE}_{\mathcal{F}_t}\left[\partial_{y\mu}H(X_t^\xi, \mathcal{L}_{X_t^\xi}, Y_t^\xi, \tilde{X}_t^\xi) \tdX_t\right] \\
&- \dX_t \cdot \tilde{\dbE}_{\mathcal{F}_t}\left[\partial_{x\mu}H(X_t^\xi, \mathcal{L}_{X_t^\xi}, Y_t^\xi, \tilde{X}_t^\xi) \tdX_t\right] 
\bigg) \rmd t \bigg]\\
\leq &-\frac{r}{2} \dbE\bigg[ \int_0^\infty \rme^{-rt} 
\bigg(  (\dX_t)^2 +
 (\dY_t)^2 \bigg)  \rmd t \bigg].
\end{aligned}
\end{equation}
Therefore,
\begin{equation}
  \label{eq: norm v eta}
\begin{aligned}
  \dbE\bigg[ \int_0^\infty \rme^{-rt} 
\bigg(  (\dX_t)^2 +
 (\dY_t)^2 \bigg)  \rmd t \bigg]\leq& \frac{2}{r}\dbE[\eta \dY_0]\\
 \leq & \epsilon \dbE[(\dY_0)^2]+\frac{1}{\epsilon r^2}.
\end{aligned}
\end{equation}
Applying It\^{o}'s formula
to $\rme^{-rt}(\dY_t)^2$, we can get that
\begin{equation}
\begin{aligned}
\dbE[(\dY_0)^2] =& \dbE\bigg[ \int_0^\infty \rme^{-rt} \bigg( 
 2\dX_t \dY_t \, \partial_{xx}H(X_t^\xi, \mathcal{L}_{X_t^\xi}, Y_t^\xi) 
+ 2(\dY_t)^2 \, \partial_{xy}H(X_t^\xi, \mathcal{L}_{X_t^\xi}, Y_t^\xi) \\
&+ 2\dY_t \cdot \tilde{\dbE}_{\mathcal{F}_t}\left[\partial_{x\mu}H(X_t^\xi, \mathcal{L}_{X_t^\xi}, Y_t^\xi, \tilde{X}_t^\xi) \tdX_t\right] \\
&- r(\dY_t)^2 - (\dZ_t)^2
\bigg) \rmd t \bigg]\\
\leq & (\lambda_2+3\lambda_3) \dbE\bigg[ \int_0^\infty \rme^{-rt} 
\bigg(  (\dX_t)^2 +
 (\dY_t)^2 \bigg)  \rmd t \bigg].
\end{aligned}
\end{equation}
Combining it with (\ref{eq: norm v eta}) and taking $\epsilon=\frac{1}{2(\lambda_2+3\lambda_3)}$,
we can get that
\begin{equation}
  \dbE\bigg[ \int_0^\infty \rme^{-rt} 
\bigg(  (\dX_t)^2 +
 (\dY_t)^2 \bigg)  \rmd t \bigg]\leq C_1,
\end{equation}
where $C_1>0$ is a constant.
At last, we apply the same procedure 
to $\rme^{-rt}\dxX_t\dxY_t$ and $\rme^{-rt}(\dY_t)^2$.
We can get that 
\begin{equation}
\dbE\bigg[ \int_0^\infty \rme^{-rt} 
\bigg(  (\dxX_t)^2 +
 (\dxY_t)^2 \bigg)  \rmd t \bigg]\leq
 C_2\dbE\bigg[ \int_0^\infty \rme^{-rt} 
\bigg(  (\dX_t)^2 +
 (\dY_t)^2 \bigg)  \rmd t \bigg]
\end{equation}
and
\begin{equation}
\begin{aligned}
  (\dxY_0)^2\leq & C_3 \dbE\bigg[ \int_0^\infty \rme^{-rt} 
\bigg(  (\dxX_t)^2 +
 (\dxY_t)^2 \bigg)  \rmd t \bigg]\\
 &+C_4\dbE\bigg[ \int_0^\infty \rme^{-rt} 
\bigg(  (\dX_t)^2 +
 (\dY_t)^2 \bigg)  \rmd t \bigg],
\end{aligned}
\end{equation}
where $C_2,C_3,C_4$ are all positive constants.
Thus, we have proved that $\dxY_0$ is bounded for fixed $(x,\xi)$ and all  $\eta$
such that $\dbE[\eta^2]\leq 1$.
\qed

\begin{cor}
  \label{cor: cont}
  The following mappings are jointly continuous:
  \begin{enumerate}
    \item $\dbL^2(\cF_0)\times\dbL^2(\cF_0)\to L_r^2\times L_r^2\times \dbL^2(\cF_0) : (\xi,\eta) \to (\dX, \dY, \dY_0)$,
    \item $\dbR\times\dbL^2(\cF_0)\times\dbL^2(\cF_0)\to L_r^2\times L_r^2\times \dbR: (x, \xi, \eta) \to (\dxX, \dxY, \dxY_0)$.
  \end{enumerate}
\end{cor}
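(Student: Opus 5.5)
Since \eqref{eq: dX} and \eqref{eq: dxX} are linear in $(\dX,\dY,\dZ)$ and $(\dxX,\dxY,\dxZ)$, I would prove joint continuity in two steps. First, for fixed coefficients, the solution depends Lipschitz-continuously on the data, and in particular on $\eta$. Second, I would show continuity of the solution with respect to perturbations of the coefficients, which enter through $\xi$ and $x$.

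\textbf{Continuity in $\eta$ at fixed $\xi$.} Because of linearity, the difference of the solutions for $\eta$ and $\eta'$ is the solution driven by $\eta-\eta'$. The estimates in the proof of Lemma \ref{lem: blf} scale with $\dbE[\eta^2]$, so they give
\[
\|\delta X^{\xi,\eta}-\delta X^{\xi,\eta'}\|_r^2+\|\delta Y^{\xi,\eta}-\delta Y^{\xi,\eta'}\|_r^2+\dbE[|\delta Y_0^{\xi,\eta}-\delta Y_0^{\xi,\eta'}|^2]\le C\,\dbE[|\eta-\eta'|^2].
\]
Here $C$ depends only on $r,\lambda_1,\lambda_2,\lambda_3$, not on $\xi$, because Assumption \ref{assum: Hb} bounds the coefficients uniformly. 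The same holds for the $x$-system. So it suffices to prove continuity in $(x,\xi)$ for fixed $\eta$, with the above uniform Lipschitz bound in $\eta$ supplying joint continuity.

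\textbf{Continuity in $(x,\xi)$ at fixed $\eta$.} Take $\xi_n\to\xi$ in $\dbL^2$ and $x_n\to x$. I first need $(X^{\xi_n},Y^{\xi_n})\to(X^\xi,Y^\xi)$ and $(X^{x_n,\xi_n},Y^{x_n,\xi_n})\to(X^{x,\xi},Y^{x,\xi})$ in $L_r^2$. This is the standard stability estimate, obtained by applying It\^o's formula to $\rme^{-rt}\hat X_t\hat Y_t$ with the monotonicity condition (ii), as in the uniqueness part of Theorem \ref{thm: fbsde}; I take it from \cite{song2025infinite}.

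I then write the difference $(\hat X,\hat Y)=(\delta X^{\xi_n,\eta}-\dX,\ \delta Y^{\xi_n,\eta}-\dY)$. It solves the same linear FBSDE with the $\xi_n$-coefficients, zero initial value, and source terms such as
\[
\dX_t\big(\pa_{xy}H(X^{\xi_n}_t,\cdot)-\pa_{xy}H(X^\xi_t,\cdot)\big)
\]
and the analogous $\tilde\dbE_{\cF_t}$ terms. The energy argument of Lemma \ref{lem: blf} (It\^o on $\rme^{-rt}\hat X\hat Y$, using $-\lambda_1+2\lambda_2<-r/2$) bounds $\|\hat X\|_r^2+\|\hat Y\|_r^2$, and then $\dbE|\hat Y_0|^2$, by a constant times the $L^2_r$ norm of these sources. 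The $x$-system is treated the same way, using the already established convergence of $(\delta X^{\xi_n,\eta},\delta Y^{\xi_n,\eta})$.

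\textbf{Main obstacle: the source terms tend to zero.} This is the step I expect to be hardest. The coefficient differences, for example $\pa_{xy}H(X^{\xi_n}_t,\cL_{X^{\xi_n}_t},Y^{\xi_n}_t)-\pa_{xy}H(X^\xi_t,\ldots)$, are bounded by $2\lambda_3$ and are only Lipschitz in the arguments. The product with $\dX_t$ is therefore only controlled in $L^1$ form, and Cauchy--Schwarz does not directly give $L^2$ convergence.

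I would use dominated convergence along subsequences. The convergence in $L^2_r$ gives a subsequence along which $X^{\xi_n}_t\to X^\xi_t$ and $Y^{\xi_n}_t\to Y^\xi_t$ for $\rmd t\otimes\dbP$-a.e. $(t,\omega)$. The laws converge in $\cW_2$ for a.e. $t$. The Lipschitz continuity of the second derivatives then gives pointwise convergence of the coefficient differences to $0$. The domination is $(2\lambda_3)^2|\dX_t|^2\rme^{-rt}$, which is integrable, so dominated convergence applies.

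For the mean-field term, I write $\tilde\dbE_{\cF_t}[\,\cdot\,\tdX_t]$ and handle it in two parts:
\begin{itemize}
\item the part involving $\delta\tilde X^{\xi_n}-\tdX$ is bounded by $\lambda_2\|\cdot\|$, which is absorbed into the energy estimate;
\item the part involving the coefficient change is handled by dominated convergence on the product space, with domination $\lambda_2^2\,\tilde\dbE|\tdX_t|^2$.
\end{itemize}

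Since every subsequence has a further subsequence converging to the same limit, the whole sequence converges. Together with the uniform Lipschitz bound in $\eta$, this gives the joint continuity of both maps in Corollary \ref{cor: cont}.
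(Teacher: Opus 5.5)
Your proposal is correct and follows essentially the same route as the paper. Both arguments combine three ingredients:
\begin{itemize}
\item energy estimates from It\^o's formula on $\rme^{-rt}\hat X_t\hat Y_t$ and $\rme^{-rt}\hat Y_t^2$, with the uniform constants of Lemma \ref{lem: blf};
\item stability of the underlying FBSDEs, taken from \cite{song2025infinite};
\item dominated convergence for the coefficient differences.
\end{itemize}
The only difference is organizational. You first use linearity and the rescaled bound of Lemma \ref{lem: blf} to get a Lipschitz bound in $\eta$ that is uniform in $(x,\xi)$, so the difference system has zero initial value. The paper instead perturbs $(\xi,\eta)$ simultaneously and handles the initial term $\eta_n-\eta$ by coupling the two It\^o estimates. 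You also spell out the subsequence and dominated-convergence details that the paper leaves implicit.
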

\proof
Fix some $(x,\xi,\eta)\in \dbR\times\dbL^2(\cF_0)\times\dbL^2(\cF_0)$
and consider a sequence $\{(x_n,\xi_n,\eta_n)\in \dbR\times\dbL^2(\cF_0)\times\dbL^2(\cF_0);n\geq 1\}$
such that 
\begin{equation}
  \lim_{n\to\infty}\left(|x_n-x|+ \dbE[|\xi_n-\xi|^2]+ \dbE[|\eta_n-\eta|^2]      \right)=0.
\end{equation}
Without loss of generality, we may assume that $\{\eta_n\}$ is bounded in $\dbL^2(\cF_0)$.
By Lemma \ref{lem: blf}, all solutions to the FBSDEs that appear in the subsequent proof are uniformly bounded.
Denote
\begin{equation}
  X^n=\delta X^{\xi_n,\eta_n}-\dX,\quad 
  Y^n=\delta Y^{\xi_n,\eta_n}-\dY,\quad 
  Z^n=\delta Z^{\xi_n,\eta_n}-\dZ.
\end{equation}
By applying Itô's formula to $\rme^{-rt}X^n_t Y^n_t$
and $\rme^{-rt}(Y^n_t)^2$
(instead of $\rme^{-rt}\dX_t\dY_t$ and $\rme^{-rt}(\dY_t)^2$ in the proof of Lemma \ref{lem: blf}), 
and combining it with the proof of Theorem 5.2 in \cite{song2025infinite} 
together with the Dominated Convergence Theorem, we obtain:
\begin{equation}
\lim_{n\to\infty}  \mathbb{E} \bigg[(Y^n_0)^2+  \int_0^\infty \rme^{-rt} \biggl( ( X^n_t)^2 + (Y^n_t)^2 \biggr)\mathrm{d}t \bigg]=0.
\end{equation}
Hence, the continuity (1) is proved. The continuity (2) can be proved similarly.
\qed

Since the mapping $\eta\to \dxY_0$ is a bounded linear functional on the Hilbert space $\dbL^2(\cF_0)$,
we can find a random variable $D^{x,\xi}\in \dbL^2(\cF_0)$, such that:
\begin{equation}
  \dxY_0=\dbE[D^{x,\xi}\cdot\eta].
\end{equation}
Whereas the framework in \cite{carmona2018probabilistic} successfully derives the Lions-differentiability of $V(t,x,\mu)$ with respect to
$\mu$ for parabolic 
master equations and finite-time FBSDEs by establishing the continuity of $D^{x,\xi}$ in $\xi$, 
our framework encounters 
a significant hurdle. Specifically, the continuity result in Corollary \ref{cor: cont} is 
merely pointwise, and the 
continuity of $\dxY_0$ with respect to $\xi$ lacks uniformity over $\eta$. 
This fundamental difference prompts our 
investigation into an alternative methodology.

Next, we will prove that for any $x\in \dbR$, $D^{x,\xi}$ can be expressed as $f(\xi)$, 
where $f$ is a Borel function depends only on the distribution of $\xi$. 
This implies the existence of the directional derivative of $\mcv(x,\mu)$.

\begin{thm}
There exists a function $\pa_{\mu}\mcv(x,\mu,\tilde{x}):\dbR\times \cP_2\times \dbR\to \dbR$, such that
\begin{equation}
  \label{eq: mcv dm}
  \lim_{\delta\to 0}\frac{1}{\delta}\left\lvert 
\mcv(x,\cL_{\xi+\delta\eta})-\mcv(x,\cL_{\xi})
  \right\rvert =\dbE[\pa_{\mu}\mcv(x,\cL_{\xi},\xi)\cdot\eta]
\end{equation}
for any $\xi,\eta\in\dbL^2(\cF_0)$.
\end{thm}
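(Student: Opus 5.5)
Since $\mcv(x,\cL_{\xi+\delta\eta})=Y_0^{x,\xi+\delta\eta}$ and $\mcv(x,\cL_\xi)=Y_0^{x,\xi}$, the convergence $\frac1\delta(Y_0^{x,\xi+\delta\eta}-Y_0^{x,\xi})\to\dxY_0$ established above gives the left side of (\ref{eq: mcv dm}) as $\dxY_0=\dbE[D^{x,\xi}\eta]$, with $D^{x,\xi}$ the Riesz representer supplied by Lemma \ref{lem: blf}. (Note that $Y_0^{x,\xi}$ depends only on $\cL_\xi$, by the distributional uniqueness of Section \ref{sec: solution} applied to (\ref{eq: 11})--(\ref{eq: 12}).) The whole task is therefore to prove that there is a Borel function $f^{x,\mu}$, depending only on $(x,\mu)$, with $D^{x,\xi}=f^{x,\mu}(\xi)$ a.s. whenever $\cL_\xi=\mu$. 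We then set $\pa_\mu\mcv(x,\mu,\tilde x)\triangleq f^{x,\mu}(\tilde x)$.

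\emph{Step 1: $D^{x,\xi}$ is $\sigma(\xi)$-measurable.} Take $\eta$ with $\dbE[\eta\mid\xi]=0$. I will show $\dxY_0=0$, so that $D^{x,\xi}\perp\{\eta:\dbE[\eta|\xi]=0\}$, i.e.\ $D^{x,\xi}\in\dbL^2(\sigma(\xi))$. Because $\cF_0$ is rich, I may take $\eta=g(\xi,U)$ with $U$ uniform and independent of $(\xi,B)$ (handle this dense class first, then use the continuity in $\eta$ from Corollary \ref{cor: cont}). The system (\ref{eq: dX})--(\ref{eq: dxX}) is linear in the input $\eta$, and its coefficients are functionals of $(\xi,B)$. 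I will use the Yamada--Watanabe-type theorem of Section \ref{sec: solution} with inputs $(\xi,\eta,B)$ to write $(\dX_t,\dY_t)=\Phi(\xi,\eta,t,B)$. This requires checking that the $\tilde\dbE_{\cF_t}[\cdots\tdX_t]$ terms fit the form $\cL_{(X_t,\phi_t(\eta,B))}$, which holds since they are measure functionals of the joint law of $(X^\xi_t,\dX_t)$.

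By linearity and uniqueness, $\dbE[\dX_t\mid \xi,B]$ and $\dbE[\dY_t\mid\xi,B]$ solve the same linear system with initial datum $\dbE[\eta|\xi,B]=\dbE[\eta|\xi]=0$. Conditioning keeps adaptedness, because $U$ is independent of $B$. The $\tilde\dbE$ terms, however, involve $\tdX_t$ for an independent copy, and these terms reduce to $\tilde\dbE[k(X_t,\tilde X_t)\,\dbE[\tdX_t\mid\tilde\xi,\tilde B]]$. Hence the conditioned pair solves the full system with zero input, and it vanishes by uniqueness (Theorem \ref{thm: fbsde}, whose monotonicity condition holds under Assumption \ref{assum: Hb}). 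Feeding this into (\ref{eq: dxX}): that equation is driven by $\dX$ only through $\tilde\dbE_{\cF_t}[\cdots\tdX_t]$, which equals the same expression with $\tdX_t$ replaced by its conditional expectation, hence zero. Since $\dxX_0=0$, uniqueness gives $\dxY\equiv0$, and in particular $\dxY_0=0$.

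\emph{Step 2: the Borel function depends only on the law.} By Step 1, $D^{x,\xi}=f^{x,\xi}(\xi)$ for some Borel $f^{x,\xi}$. Let $\xi'$ have the same law as $\xi$, and test with $\eta=h(\xi)$, $\eta'=h(\xi')$ for bounded Borel $h$. The inputs $(\xi,h(\xi),B)$ and $(\xi',h(\xi'),B)$ have the same law, so the weak uniqueness theorem gives equal laws for the solutions of (\ref{eq: 11}), (\ref{eq: 12}), (\ref{eq: dX}) and (\ref{eq: dxX}), hence $\delta Y^{x,\xi,h(\xi)}_0=\delta Y^{x,\xi',h(\xi')}_0$ (deterministic). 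Therefore $\int f^{x,\xi}h\,\rmd\mu=\int f^{x,\xi'}h\,\rmd\mu$ for all $h$, so $f^{x,\xi}=f^{x,\xi'}$ $\mu$-a.e. Fixing one representative per $(x,\mu)$ defines $\pa_\mu\mcv$.

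\emph{Expected main obstacle.} The delicate point is Step 1: justifying that conditioning on $(\xi,B)$ commutes with the McKean--Vlasov terms and preserves the $L_r^2$ adapted solution class. This is where the representation $\Phi(\xi,\eta,t,B)$ from the distributional uniqueness theorem is essential, since it makes the conditional expectation an explicit integral over $U$.
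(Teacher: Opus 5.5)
Your proposal is correct. Step 2 is essentially the paper's argument: test with $\eta=h(\xi)$, invoke weak uniqueness, and conclude $f^{x,\xi}=f^{x,\xi'}$ $\mu$-a.e.

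Step 1, however, takes a genuinely different route.

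\textbf{The paper's route.} It argues by contradiction through law invariance. It sets $\eta_1=D^{x,\xi}-\dbE[D^{x,\xi}\mid\xi]$ and builds $\eta_2$ so that $\eta_1,\eta_2$ are conditionally i.i.d.\ given $\xi$. The Riesz representation then gives $\delta Y_0^{x,\xi,\eta_1}=\dbE[\eta_1^2]$ and $\delta Y_0^{x,\xi,\eta_2}=0$. Finally, the Yamada--Watanabe theorem forces these to coincide, because $(\xi,\eta_1)$ and $(\xi,\eta_2)$ have the same law.

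\textbf{Your route.} You exploit linearity instead. Conditioning the linear system on $\xi$ and the whole Brownian path shows that $\dbE[\dX\mid\xi,B]$ is the solution with input $\dbE[\eta\mid\xi]$. Hence $\eta\mapsto\delta Y_0^{x,\xi,\eta}$ factors through $\eta\mapsto\dbE[\eta\mid\xi]$.

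\textbf{What each buys.} The paper's argument is shorter and reuses the distributional-uniqueness theorem just proved. The price is extra randomness in $\cF_0$, beyond $\sigma(\xi,D^{x,\xi})$, to construct the conditionally i.i.d.\ copy $\eta_2$, plus weak uniqueness already at this stage.

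Your argument needs only three things: strong uniqueness, $\cF_0\perp B$, and the fact that $X^\xi_t,Y^\xi_t$ are measurable with respect to $\sigma(\xi)\vee\sigma(B_s,s\le t)$. It also yields the stronger structural identity $\delta Y^{x,\xi,\eta}=\delta Y^{x,\xi,\dbE[\eta|\xi]}$. In exchange, you must check two commutations:
\begin{itemize}
\item Conditioning commutes with the backward stochastic integral. The conditioned martingale part is a martingale in the filtration generated by $\xi$ and $B$, so martingale representation there supplies $\bar Z$, with $\|\bar Z\|_r\le\|\dZ\|_r$ by Jensen.
\item Conditioning commutes with the $\tilde{\dbE}_{\cF_t}$ terms. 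You handled this correctly by replacing $\tdX_t$ with its conditional expectation given $(\tilde\xi,\tilde B)$.
\end{itemize}

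\textbf{A simplification.} The reduction to $\eta=g(\xi,U)$ plus a density argument is unnecessary. As phrased, the density claim is also not automatic when $\cF_0$ is strictly larger than $\sigma(\xi,U)$. Since $\cF_0$ is independent of $B$, $\dbE[\eta\mid\xi,B]=\dbE[\eta\mid\xi]$ holds for every $\eta\in\dbL^2(\cF_0)$. Adaptedness of $\dbE[\dX_t\mid\xi,B]$ follows because the increments $B_s-B_t$, $s\ge t$, are independent of $\cF_t$. You can therefore apply your argument directly to $\eta=D^{x,\xi}-\dbE[D^{x,\xi}\mid\xi]$ and conclude $\dbE[(D^{x,\xi}-\dbE[D^{x,\xi}\mid\xi])^2]=0$.
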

\proof
Fix $x\in\dbR$, we already know that
\begin{equation}
  \lim_{\delta\to 0}\frac{1}{\delta}\left\lvert 
\mcv(x,\cL_{\xi+\delta\eta})-\mcv(x,\cL_{\xi})
  \right\rvert =\dxY_0=\dbE[D^{x,\xi}\cdot\eta].
\end{equation}
For some $\xi\in \dbL^2(\cF_0)$, we want to show that $D^{x,\xi}=f(\xi)$ for some Borel function $f$.
We will prove this by contradiction.
Assume that $D^{x,\xi}\neq \dbE[D^{x,\xi}|\xi]$, take 
$\eta_1= D^{x,\xi}-\dbE[D^{x,\xi}|\xi]$. 
Construct a random variable $\eta_2$ such that, conditional on $\xi$, 
$\eta_1$ and $\eta_2$ are independent and identically distributed (i.i.d.).
On the one hand,
\begin{equation}
  \delta Y_0^{x,\xi,\eta_1}=\dbE[D^{x,\xi}\cdot\eta_1]=\dbE[(\eta_1)^2]>0.
\end{equation}
On the other hand,
\begin{equation}
  \begin{aligned}
\delta Y_0^{x,\xi,\eta_2}=&\dbE[D^{x,\xi}\cdot\eta_2]\\
=&\dbE[\eta_1\eta_2]+\dbE[\dbE[D^{x,\xi}|\xi]\eta_2]\\
=&\dbE[\dbE[\eta_1|\xi]\dbE[\eta_2|\xi]]+\dbE[\dbE[D^{x,\xi}|\xi]\dbE[\eta_2|\xi]]\\
=&0.
  \end{aligned}
\end{equation}
However, by the weak uniqueness of FBSDEs,
we know that $\delta Y_0^{x,\xi,\eta_1}=\delta Y_0^{x,\xi,\eta_2}$.
We therefore conclude that
\begin{equation}
  D^{x,\xi}= \dbE[D^{x,\xi}|\xi],\quad ~\mbox{a.s.}
\end{equation}

We already know that $D^{x,\xi}=f(\xi)$. 
Next, we will prove that $f$ depends only on the distribution of $\xi$. Select $\xi_1$
 and $\xi_2$
 such that $\cL_{\xi_1}=\cL_{\xi_2}=\mu$, and $D^{x,\xi_1}=f^1(\xi_1),D^{x,\xi_2}=f^2(\xi_2)$. 
Then, for any $g\in C_b(\dbR)$, take $\eta_1=g(\xi_1)$ and $\eta_2=g(\xi_2)$. 
We have $\delta Y_0^{x,\xi_1,\eta_1}=\delta Y_0^{x,\xi_2,\eta_2}$, which implies
\begin{equation}
  \int_\dbR f^1(x)g(x)\mu(\rmd x)=\int_\dbR f^2(x)g(x)\mu(\rmd x), \quad \forall g\in C_b(\dbR).
\end{equation}
Then we know
\begin{equation}
  f^1(x)=f^2(x),\quad ~\mbox{$\mu$-a.e.}
\end{equation}

Now we can conclude that, for any $(x,\mu)\in \dbR\times \cP_2$, there exists
a function $f^{x,\mu}(\tilde{x})$ defined on the support of $\mu$, such that
$D^{x,\xi}=f^{x,\mu}(\xi)$ for all $\xi\in \dbL^2(\cF_0;\mu)$.
Then we can define
\begin{equation}
  \pa_{\mu}\mcv(x,\mu,\tilde{x})\triangleq f^{x,\mu}(\tilde{x})
\end{equation}
for some version of $f^{x,\mu}$. This satisfies the conditions of the theorem.
\qed

It should be noted that although we have proved that $\pa_{\mu}\mcv(x,\mu,\tilde{x})$ satisfies 
(\ref{eq: mcv dm}), this only demonstrates 
the differentiability of $\mcv$ with respect to the measure in the sense of directional derivative, which is 
still far from constituting a  Lions-derivative.
Similarly, for the value function 
\begin{equation}
  V(x,\mu)=\dbE\bigg[ \int_0^\infty \rme^{-rt} 
F(X^{x,\xi}_t,\cL_{X_t^{\xi}},Y_t^{x,\xi})  \rmd t \bigg]
\end{equation} 
where $F(x,\mu,y)\triangleq f(x,\mu,\hat{\alpha}(x,y))=H(x,\mu,y)-y\pa_yH(x,\mu,y)$,
it can also be shown that:
\begin{equation}
  \begin{aligned}
    \lim_{\delta\to 0} \frac{1}{\delta} \left\lvert 
      V(x, \cL_{\xi + \delta \eta}) - V(x, \cL_{\xi})
    \right\rvert 
    = &\dbE\bigg[ \int_0^\infty \rme^{-rt} \biggl(
        \pa_x F(X^{x,\xi}_t, \cL_{X_t^{\xi}}, Y_t^{x,\xi}) \cdot \dxX_t \\
        &\quad + \pa_y F(X^{x,\xi}_t, \cL_{X_t^{\xi}}, Y_t^{x,\xi}) \cdot \dxY_t \\
        &\quad + \tilde{\dbE}_{\cF_t} \left[
            \pa_\mu F(X^{x,\xi}_t, \cL_{X_t^{\xi}}, Y_t^{x,\xi}, \tilde{X}_{t}^{\xi}) \cdot \tdX_t
          \right]
      \biggr) \rmd t \bigg].
  \end{aligned}
\end{equation}
By the same technique as before, one can prove the existence of a function 
$\pa_\mu V(x,\mu,\tilde{x})$ satisfying
\begin{equation}
  \lim_{\delta\to 0} \frac{1}{\delta} \left\lvert 
      V(x, \cL_{\xi + \delta \eta}) - V(x, \cL_{\xi})
    \right\rvert 
    = \dbE[\pa_{\mu}V(x,\cL_{\xi},\xi)\cdot\eta]
\end{equation}
for all $\xi,\eta\in\dbL^2(\cF_0)$.

Next, we aim to select a sufficiently regular version of $\pa_\mu \mcv$ such that
$\pa_\mu \mcv(x, \cL_{\xi},\xi)$ is continuous with respect to $\xi$. 
This allows us to strengthen the G\^{a}teaux derivative to the Fr\'echet derivative, thereby
\begin{equation}
      \mcv(x, \cL_{\xi + \eta}) - \mcv(x, \cL_{\xi})
=\dbE[\pa_{\mu}\mcv(x,\cL_{\xi},\xi)\cdot\eta] +o(\left\lVert \eta\right\rVert_{\dbL^2} ).
\end{equation}
This demonstrates that $\pa_\mu \mcv $ is the Lions-derivative of $\mcv$.

\subsection{Representation of the Lions-derivative}
First we consider the case that $\xi$ is discrete:
\begin{equation}
  \label{eq: discrete conditon}
  p_i=\dbP(\xi=x_i),\quad i=1,2,\cdots,n.
\end{equation} 
Then we have
\begin{equation}
  \label{eq: d repre}
  \delta Y_0^{x,\xi,I_{\{\xi=x_i\}}}=\dbE[\pa_{\mu}\mcv(x,\cL_{\xi},\xi)\cdot I_{\{\xi=x_i\}}]
  =p_i\cdot \pa_\mu \mcv(x,\cL_{\xi},x_i).
\end{equation}
Thus, we can determine the values of $\pa_\mu \mcv(x,\cL_{\xi},\cdot)$ on the support of $\cL_{\xi}$.

To further investigate $\delta Y_0^{x,\xi,I_{\{\xi=x_i\}}}$, 
we introduce the following FBSDEs:
\begin{equation}
\label{eq: d part}
\begin{cases}
  \begin{aligned}
    \rmd \gXi_t = & \left\{ 
        \gXi_t \partial_{xy}H(X_{t}^{x_i,\xi},\mathcal{L}_{X_{t}^{\xi}},Y_{t}^{x_i,\xi}) 
        + \gYi_t \partial_{yy}H(X_{t}^{x_i,\xi},\mathcal{L}_{X_{t}^{\xi}},Y_{t}^{x_i,\xi}) 
        \right. \\
                  & + p_i\tilde{\dbE}_{\cF_t} \left[ 
        \tgXi_t \partial_{y\mu}H(X_{t}^{x_i,\xi},\mathcal{L}_{X_{t}^{\xi}},Y_{t}^{x_i,\xi},\tilde{X}_{t}^{x_i,\xi}) 
        \right. \\
                  & \left. \left. 
        + \tgXii_t \partial_{y\mu}H(X_{t}^{x_i,\xi},\mathcal{L}_{X_{t}^{\xi}},Y_{t}^{x_i,\xi},\tilde{X}_{t}^{\xi})
        \right] 
        \right\} \rmd t,
  \end{aligned}\\
  \begin{aligned}
    \rmd \gXii_t = & \left\{ 
        \gXii_t \partial_{xy}H(X_{t}^{\xi},\mathcal{L}_{X_{t}^{\xi}},Y_{t}^{\xi}) 
        + \gYii_t \partial_{yy}H(X_{t}^{\xi},\mathcal{L}_{X_{t}^{\xi}},Y_{t}^{\xi}) 
        \right. \\
                  & + \tilde{\dbE}_{\cF_t} \left[ 
        \tgXi_t \partial_{y\mu}H(X_{t}^{\xi},\mathcal{L}_{X_{t}^{\xi}},Y_{t}^{\xi},\tilde{X}_{t}^{x_i,\xi}) 
        \right. \\
                  & \left. \left. 
        + \tgXii_t \partial_{y\mu}H(X_{t}^{\xi},\mathcal{L}_{X_{t}^{\xi}},Y_{t}^{\xi},\tilde{X}_{t}^{\xi})
        \right] \cdot I_{\{\xi\neq x_i\}}
        \right\} \rmd t,
  \end{aligned}\\
  \begin{aligned}
    \rmd \gYi_t =- & \left\{ 
        \gXi_t \partial_{xx}H(X_{t}^{x_i,\xi},\mathcal{L}_{X_{t}^{\xi}},Y_{t}^{x_i,\xi}) 
        + \gYi_t \partial_{xy}H(X_{t}^{x_i,\xi},\mathcal{L}_{X_{t}^{\xi}},Y_{t}^{x_i,\xi}) 
        \right. \\
                  &-r\gYi_t + p_i\tilde{\dbE}_{\cF_t} \left[ 
        \tgXi_t \partial_{x\mu}H(X_{t}^{x_i,\xi},\mathcal{L}_{X_{t}^{\xi}},Y_{t}^{x_i,\xi},\tilde{X}_{t}^{x_i,\xi}) 
        \right. \\
                  & \left. \left. 
        + \tgXii_t \partial_{x\mu}H(X_{t}^{x_i,\xi},\mathcal{L}_{X_{t}^{\xi}},Y_{t}^{x_i,\xi},\tilde{X}_{t}^{\xi})
        \right] 
        \right\} \rmd t\\
        +& \gZi_t \rmd B_t ,
  \end{aligned}\\
    \begin{aligned}
    \rmd \gYii_t = -& \left\{ 
        \gXii_t \partial_{xx}H(X_{t}^{\xi},\mathcal{L}_{X_{t}^{\xi}},Y_{t}^{\xi}) 
        + \gYii_t \partial_{xy}H(X_{t}^{\xi},\mathcal{L}_{X_{t}^{\xi}},Y_{t}^{\xi}) 
        \right. \\
                  &-r\gYii_t + \tilde{\dbE}_{\cF_t} \left[ 
        \tgXi_t \partial_{x\mu}H(X_{t}^{\xi},\mathcal{L}_{X_{t}^{\xi}},Y_{t}^{\xi},\tilde{X}_{t}^{x_i,\xi}) 
        \right. \\
                  & \left. \left. 
        + \tgXii_t \partial_{x\mu}H(X_{t}^{\xi},\mathcal{L}_{X_{t}^{\xi}},Y_{t}^{\xi},\tilde{X}_{t}^{\xi})
        \right] \cdot I_{\{\xi\neq x_i\}}
        \right\} \rmd t\\
        +& \gZii_t \rmd B_t,
  \end{aligned}\\
  \gXi_0=1,\quad \gXii_0=0.
\end{cases}
\end{equation}

\begin{equation}
\label{eq: d total}
\begin{cases}
    \begin{aligned}
    \rmd \gXxi_t = & \left\{ 
        \gXxi_t \partial_{xy}H(X_{t}^{x,\xi},\mathcal{L}_{X_{t}^{\xi}},Y_{t}^{x,\xi}) 
        + \gYxi_t \partial_{yy}H(X_{t}^{x,\xi},\mathcal{L}_{X_{t}^{\xi}},Y_{t}^{x,\xi}) 
        \right. \\
                  & + \tilde{\dbE}_{\cF_t} \left[ 
        \tgXi \partial_{y\mu}H(X_{t}^{x,\xi},\mathcal{L}_{X_{t}^{\xi}},Y_{t}^{x,\xi},\tilde{X}_{t}^{x_i,\xi}) 
        \right. \\
                  & \left. \left. 
        + \tgXii \partial_{y\mu}H(X_{t}^{x,\xi},\mathcal{L}_{X_{t}^{\xi}},Y_{t}^{x,\xi},\tilde{X}_{t}^{\xi})
        \right] 
        \right\} \rmd t,
  \end{aligned}\\
  \begin{aligned}
    \rmd \gYxi_t =- & \left\{ 
        \gXxi_t \partial_{xx}H(X_{t}^{x,\xi},\mathcal{L}_{X_{t}^{\xi}},Y_{t}^{x,\xi}) 
        + \gYxi_t \partial_{xy}H(X_{t}^{x,\xi},\mathcal{L}_{X_{t}^{\xi}},Y_{t}^{x,\xi}) 
        \right. \\
                  &-r\gYxi_t + \tilde{\dbE}_{\cF_t} \left[ 
        \tgXi_t \partial_{x\mu}H(X_{t}^{x,\xi},\mathcal{L}_{X_{t}^{\xi}},Y_{t}^{x,\xi},\tilde{X}_{t}^{x_i,\xi}) 
        \right. \\
                  & \left. \left. 
        + \tgXii_t \partial_{x\mu}H(X_{t}^{x,\xi},\mathcal{L}_{X_{t}^{\xi}},Y_{t}^{x,\xi},\tilde{X}_{t}^{\xi})
        \right] 
        \right\} \rmd t\\
        +& \gZxi_t \rmd B_t ,
  \end{aligned}\\
  \gXxi_0=0.
\end{cases}
\end{equation}

\begin{thm}
  Under Assumption \ref{assum: Hb} and the condition that $\xi\in \dbL^2(\cF_0)$ 
  is a discrete random variable satisfying (\ref{eq: discrete conditon}),
  both FBSDEs (\ref{eq: d part}) and (\ref{eq: d total})
  admit unique solutions in $L^2_r$. And we have the relationship:
\begin{equation}
  \delta Y_t^{x,\xi,I_{\{\xi=x_i\}}}=p_i\cdot \nabla_{\mu}Y^{x,\xi,x_i}_t.
\end{equation}
This means
\begin{equation}
  \pa_\mu \mcv(x,\cL_{\xi},x_i)=\gYxi_0.
\end{equation}
\end{thm}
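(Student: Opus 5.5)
The plan is to obtain well-posedness from Theorem \ref{thm: fbsde} in its multidimensional form, and then to identify the solution of (\ref{eq: dX}) with $\eta=I_{\{\xi=x_i\}}$ explicitly in terms of the solution of (\ref{eq: d part}). The relation then follows from uniqueness for the linear FBSDEs (\ref{eq: dX}) and (\ref{eq: dxX}).

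\emph{Well-posedness.} I take $K=r$ throughout. System (\ref{eq: d part}) is linear in the pair $(\gXi,\gXii)$, with bounded random coefficients and mean-field terms. The extra input $A_t=(X^{x_i,\xi}_t,X^\xi_t,\xi)$ is given. Lipschitz continuity and (i) of Assumption \ref{assum: fbsde} follow from Assumption \ref{assum: Hb}(iii) and $|\pa_{x\mu}H|,|\pa_{y\mu}H|\le\lambda_2$.

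For the monotonicity (\ref{eq: condition}), I compute the left-hand side for differences $(\hat X^1,\hat X^2,\hat Y^1,\hat Y^2)$. The terms $-K\hat X\hat Y$ cancel against the $r\hat Y$ in the backward drift. The diagonal terms contribute at most $-\lambda_1\dbE[(\hat X^1)^2+(\hat Y^1)^2+(\hat X^2)^2+(\hat Y^2)^2]$, using $\pa_{yy}H\le-\lambda_1$ and $\pa_{xx}H\ge\lambda_1$. The cross mean-field terms carry the weights $p_i$ and $I_{\{\xi\ne x_i\}}$. By Cauchy--Schwarz, and using $p_i+\dbP(\xi\neq x_i)\le 1$ to absorb these weights, they are bounded by $2\lambda_2\dbE[\,\cdot\,]$. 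So $\kappa=\lambda_1-2\lambda_2>r/2=K/2$, and Theorem \ref{thm: fbsde} applies. Once $(\gXi,\gXii)$ is known, (\ref{eq: d total}) is a decoupled-input FBSDE of exactly the type (\ref{eq: dxX}), so the same argument gives a unique $L^2_r$ solution.

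\emph{Identification.} First, since $\xi$ is $\cF_0$-measurable and discrete, strong uniqueness for (\ref{eq: 11}) and (\ref{eq: 12}) gives $(X^\xi,Y^\xi)=(X^{x_i,\xi},Y^{x_i,\xi})$ on $\{\xi=x_i\}$. Second, $(\gXii,\gYii)$ vanishes on $\{\xi=x_i\}$. On that $\cF_0$-event the equation is homogeneous linear with zero initial value, so multiplying by $I_{\{\xi=x_i\}}$ and using uniqueness in $L^2_r$ shows $I_{\{\xi=x_i\}}(\gXii,\gYii,\gZii)=0$.

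Next I set
\begin{equation*}
\bar X_t=I_{\{\xi=x_i\}}\gXi_t+p_i\gXii_t,\qquad \bar Y_t=I_{\{\xi=x_i\}}\gYi_t+p_i\gYii_t,
\end{equation*}
with $\bar Z$ defined analogously, and check that this triple solves (\ref{eq: dX}) with $\eta=I_{\{\xi=x_i\}}$. The key computation concerns the mean-field term. The processes $\tgXi$ and $\tilde X^{x_i,\xi}$ depend only on $B'$ and $\tilde\xi$ through the law of $\xi$, not on the value of $\tilde\xi$, so they are independent of $\tilde\xi$. Hence
\begin{equation*}
\tilde\dbE_{\cF_t}\big[\pa_{y\mu}H(\cdot,\tilde X^\xi_t)\,\tilde{\bar X}_t\big]
=p_i\tilde\dbE_{\cF_t}\big[\pa_{y\mu}H(\cdot,\tilde X^{x_i,\xi}_t)\tgXi_t\big]+p_i\tilde\dbE_{\cF_t}\big[\pa_{y\mu}H(\cdot,\tilde X^{\xi}_t)\tgXii_t\big],
\end{equation*}
and the same holds for the $\pa_{x\mu}H$ term.

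On $\{\xi=x_i\}$ this is exactly the drift in the $\gXi$ equation. On $\{\xi\neq x_i\}$ it equals $p_i$ times the drift in the $\gXii$ equation, where the indicator is $1$. The remaining linear terms match by the first fact above, and the initial condition is $\bar X_0=I_{\{\xi=x_i\}}$. By uniqueness of (\ref{eq: dX}) in $L^2_r$, $\delta X^{\xi,I_{\{\xi=x_i\}}}=\bar X$.

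Substituting this into (\ref{eq: dxX}), its inhomogeneous mean-field term becomes $p_i$ times the one in (\ref{eq: d total}). By linearity, $p_i(\gXxi,\gYxi,\gZxi)$ solves (\ref{eq: dxX}), so uniqueness gives $\delta Y^{x,\xi,I_{\{\xi=x_i\}}}_t=p_i\gYxi_t$. Comparing at $t=0$ with (\ref{eq: d repre}) yields $\pa_\mu\mcv(x,\cL_\xi,x_i)=\gYxi_0$, since $p_i>0$.

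The main obstacle I expect is the monotonicity check for the coupled two-component system (\ref{eq: d part}). The weights there are asymmetric ($p_i$ in one equation, the indicator in the other), and I need to make sure the cross terms are still controlled by $2\lambda_2$. If the naive bound fails, I would instead apply Theorem \ref{thm: fbsde} directly to the one-dimensional process $\bar X$. That process satisfies an equation of the form (\ref{eq: dX}), which is known to be well posed. From it I would reconstruct $(\gXi,\gXii)$ by restricting to the events $\{\xi=x_i\}$ and $\{\xi\neq x_i\}$.
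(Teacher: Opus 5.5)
Your proposal is correct and follows essentially the paper's route. You establish well-posedness of the two-component linear system, identify $\delta\Phi^{\xi,I_{\{\xi=x_i\}}}$ with $I_{\{\xi=x_i\}}\nabla\Phi^{\xi,x_i}+p_i\nabla\Phi^{\xi,x_i,\star}$ using $\Phi^{\xi}=\Phi^{x_i,\xi}$ on $\{\xi=x_i\}$ and the independence of $(\gXi, X^{x_i,\xi})$ from $\xi$, then substitute into \eqref{eq: dxX} and compare with \eqref{eq: d repre}. The one addition is your explicit monotonicity check for Theorem \ref{thm: fbsde} with $K=r$ and $\kappa=\lambda_1-2\lambda_2$, which the paper only calls ``easily shown''; it holds with constant $2\lambda_2$ if you absorb the weights jointly, via $\dbE|I_{\{\xi\neq x_i\}}U|\le \dbP(\xi\neq x_i)^{1/2}\dbE[U^2]^{1/2}$, $p_i\le p_i^{1/2}$ and a single Cauchy--Schwarz across the two components.
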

\proof
For the FBSDE (\ref{eq: d part}), we denote 
\begin{equation}
  X\triangleq (\gXi,\gXii)^T,\quad Y\triangleq (\gYi,\gYii)^T,
\end{equation}
thereby reformulating it as a two-dimensional linear FBSDE 
in terms of the pair $(X,Y)$. Under Assumption \ref{assum: Hb}, it can be easily shown that this FBSDE admits a 
unique solution $(X,Y)$ in $L_r^2([0,\infty);\dbR^2)$.
Next, we attempt to verify the following relation:
\begin{equation}
  \label{eq: relation1}
  \delta \Phi^{\xi,I_{\{\xi=x_i\}}}=
  \nabla \Phi^{\xi,x_i}I_{\{\xi=x_i\}}+p_i \nabla \Phi^{\xi,x_i,\star},\quad \Phi\in\{X,Y,Z\}.
\end{equation}
Since the solutions to all equations have been explicitly obtained and all the equations are linear, 
we can verify by direct substitution.
Noticing that
\begin{equation}
  \Phi^{\xi}=\sum_{i=1}^{n} \Phi^{x_i,\xi}I_{\{\xi=x_i\}},\quad \Phi\in\{X,Y,Z\},
\end{equation}
we have
\begin{equation}
\begin{aligned}
 & \gXi_t I_{\{\xi=x_i\}} \cdot\partial_{xy}H(X_{t}^{x_i,\xi},\mathcal{L}_{X_{t}^{\xi}},Y_{t}^{x_i,\xi}) +
  p_i\gXii_t \partial_{xy}H(X_{t}^{\xi},\mathcal{L}_{X_{t}^{\xi}},Y_{t}^{\xi}) \\
=&(\gXi_t I_{\{\xi=x_i\}}+p_i\gXii_t)  \partial_{xy}H(X_{t}^{\xi},\mathcal{L}_{X_{t}^{\xi}},Y_{t}^{\xi}).
\end{aligned}
\end{equation}
Moreover, the processes $(\gXi,\gYi,\gZi)$ are independent of $\xi$, then we have
\begin{equation}
\begin{aligned}
& I_{\{\xi=x_i\}}\cdot p_i\tilde{\dbE}_{\cF_t} \left[ 
        \tgXi_t \partial_{y\mu}H(X_{t}^{x_i,\xi},\mathcal{L}_{X_{t}^{\xi}},Y_{t}^{x_i,\xi},\tilde{X}_{t}^{x_i,\xi})  
        + \tgXii_t \partial_{y\mu}H(X_{t}^{x_i,\xi},\mathcal{L}_{X_{t}^{\xi}},Y_{t}^{x_i,\xi},\tilde{X}_{t}^{\xi})
        \right]\\
=&I_{\{\xi=x_i\}}\cdot
\tilde{\dbE}_{\cF_t} \left[ 
        (\tgXi_t I_{\{\tilde{\xi}=x_i\}}+p_i\tgXii_t)
        \partial_{y\mu}H(X_{t}^{\xi},\mathcal{L}_{X_{t}^{\xi}},Y_{t}^{\xi},\tilde{X}_{t}^{\xi})  
        \right]
\end{aligned}
\end{equation}
and
\begin{equation}
  \begin{aligned}
&p_i\tilde{\dbE}_{\cF_t} \left[ 
        \tgXi_t \partial_{y\mu}H(X_{t}^{\xi},\mathcal{L}_{X_{t}^{\xi}},Y_{t}^{\xi},\tilde{X}_{t}^{x_i,\xi}) 
        + \tgXii_t \partial_{y\mu}H(X_{t}^{\xi},\mathcal{L}_{X_{t}^{\xi}},Y_{t}^{\xi},\tilde{X}_{t}^{\xi})
        \right]\cdot I_{\{\xi\neq x_i\}}\\
=&\tilde{\dbE}_{\cF_t} \left[ 
        (\tgXi_t I_{\{\tilde{\xi}=x_i\}}+p_i\tgXii_t) \partial_{y\mu}H(X_{t}^{\xi},\mathcal{L}_{X_{t}^{\xi}},Y_{t}^{\xi},\tilde{X}_{t}^{\xi}) 
        \right]\cdot I_{\{\xi\neq x_i\}}.
\end{aligned}
\end{equation}
Combining all the relations above, we can verify Equation (\ref{eq: relation1}).

By substituting the solution of FBSDE (\ref{eq: d part}) into FBSDE (\ref{eq: d total}), 
it is straightforward to conclude that FBSDE (\ref{eq: d total}) admits a unique $L_r^2$ solution.
Observing that
\begin{equation}
\begin{aligned}
&\tilde{\dbE}_{\cF_t} \left[ 
      \delta\tilde{X}_t^{\xi,I_{\{\xi=x_i\}}} \partial_{y\mu}H(X_{t}^{x,\xi},\mathcal{L}_{X_{t}^{\xi}},Y_{t}^{x,\xi},\tilde{X}_{t}^{\xi}) 
        \right] \\
=&  \tilde{\dbE}_{\cF_t} \left[ 
        \tgXi I_{\{\tilde{\xi}=x_i\}}\cdot \partial_{y\mu}H(X_{t}^{x,\xi},\mathcal{L}_{X_{t}^{\xi}},Y_{t}^{x,\xi},\tilde{X}_{t}^{x_i,\xi}) 
        + p_i \tgXii \partial_{y\mu}H(X_{t}^{x,\xi},\mathcal{L}_{X_{t}^{\xi}},Y_{t}^{x,\xi},\tilde{X}_{t}^{\xi})
        \right] \\
=& p_i \tilde{\dbE}_{\cF_t} \left[ 
        \tgXi \partial_{y\mu}H(X_{t}^{x,\xi},\mathcal{L}_{X_{t}^{\xi}},Y_{t}^{x,\xi},\tilde{X}_{t}^{x_i,\xi}) 
        + \tgXii \partial_{y\mu}H(X_{t}^{x,\xi},\mathcal{L}_{X_{t}^{\xi}},Y_{t}^{x,\xi},\tilde{X}_{t}^{\xi})
        \right], 
\end{aligned}
\end{equation}
we obtain 
\begin{equation}
  \delta\Phi^{x,\xi,I_{\{\xi=x_i\}}}=p_i\nabla_\mu \Phi^{x,\xi,x_i},\quad \Phi\in\{X,Y,Z\}.
\end{equation}
Particularly,
\begin{equation}
  \delta Y_0^{x,\xi,I_{\{\xi=x_i\}}}=p_i\cdot \nabla_{\mu}Y^{x,\xi,x_i}_0.
\end{equation}
Combined with (\ref{eq: d repre}), we can get that:
\begin{equation}
  \pa_\mu \mcv(x,\cL_{\xi},x_i)=\gYxi_0.
\end{equation}

\qed

Next, we approximate the absolutely continuous distribution by the discrete ones, 
thereby obtaining a version of $\pa_\mu \mcv(x,\mu,\cdot)$ when $\mu$ is absolutely continuous. 
To this end, for any $(x,\xi,\tilde{x})\in \dbR\times \dbL^2(\cF_0)\times \dbR$, 
we introduce the following three FBSDEs:
\begin{equation}
\label{eq: g pa1}
\begin{cases}
\mathrm{d}  \gxX_t = \left[ \gxX_t \partial_{xy}H(X_{t}^{x,\xi},\mathcal{L} _{X_{t}^{\xi}},Y_{t}^{x, \xi}) + \gxY_t \partial_{yy}H(X_{t}^{x,\xi},\mathcal{L} _{X_{t}^{\xi}},Y_{t}^{x,\xi}) \right] \mathrm{d} t, \\
\begin{aligned}
  \mathrm{d} \gxY_t = & -\left[ \gxX_t \partial_{xx} {H} (X_{t}^{x,\xi},\mathcal{L} _{X_{t}^{\xi}}, Y_{t}^{x,\xi}) + \gxY_t \partial_{xy} {H} (X_{t}^{x,\xi},\mathcal{L} _{X_{t}^{\xi}}, Y_{t}^{x,\xi}) - r\gxY_t \right] \mathrm{d} t \\
  & + \gxZ_t \mathrm{d} B_t,
\end{aligned} \\
\gxX_0 = 1.
\end{cases}
\end{equation}

\begin{equation}
\label{eq: g pa2}
\begin{cases}
    \begin{aligned}
    \rmd \gXx_t = & \left\{ 
        \gXx_t \partial_{xy}H(X_{t}^{\xi},\mathcal{L}_{X_{t}^{\xi}},Y_{t}^{\xi}) 
        + \gYx_t \partial_{yy}H(X_{t}^{\xi},\mathcal{L}_{X_{t}^{\xi}},Y_{t}^{\xi}) 
        \right. \\
                  & + \tilde{\dbE}_{\cF_t} \left[ 
        \tgxX_t \partial_{y\mu}H(X_{t}^{\xi},\mathcal{L}_{X_{t}^{\xi}},Y_{t}^{\xi},\tilde{X}_{t}^{x,\xi}) 
        \right. \\
                & \left. \left. 
        + \tgXx_t \partial_{y\mu}H(X_{t}^{\xi},\mathcal{L}_{X_{t}^{\xi}},Y_{t}^{\xi},\tilde{X}_{t}^{\xi})
        \right] 
        \right\} \rmd t,
  \end{aligned}\\
      \begin{aligned}
    \rmd \gYx_t = -& \left\{ 
        \gXx_t \partial_{xx}H(X_{t}^{\xi},\mathcal{L}_{X_{t}^{\xi}},Y_{t}^{\xi}) 
        + \gYx_t \partial_{xy}H(X_{t}^{\xi},\mathcal{L}_{X_{t}^{\xi}},Y_{t}^{\xi}) 
        \right. \\
                  &-r\gYx_t + \tilde{\dbE}_{\cF_t} \left[ 
        \tgxX_t \partial_{x\mu}H(X_{t}^{\xi},\mathcal{L}_{X_{t}^{\xi}},Y_{t}^{\xi},\tilde{X}_{t}^{x,\xi}) 
        \right. \\
                  & \left. \left. 
        + \tgXx_t \partial_{x\mu}H(X_{t}^{\xi},\mathcal{L}_{X_{t}^{\xi}},Y_{t}^{\xi},\tilde{X}_{t}^{\xi})
        \right] 
        \right\} \rmd t\\
        +& \gZx_t \rmd B_t,
  \end{aligned}\\
  \gXx_0=0.
\end{cases} 
\end{equation}

\begin{equation}
\label{eq: g total}
\begin{cases}
    \begin{aligned}
    \rmd \gXxx_t = & \left\{ 
        \gXxx_t \partial_{xy}H(X_{t}^{x,\xi},\mathcal{L}_{X_{t}^{\xi}},Y_{t}^{x,\xi}) 
        + \gYxx_t \partial_{yy}H(X_{t}^{x,\xi},\mathcal{L}_{X_{t}^{\xi}},Y_{t}^{x,\xi}) 
        \right. \\
                  & + \tilde{\dbE}_{\cF_t} \left[ 
        \tgtxX_t \partial_{y\mu}H(X_{t}^{x,\xi},\mathcal{L}_{X_{t}^{\xi}},Y_{t}^{x,\xi},\tilde{X}_{t}^{x,\xi}) 
        \right. \\
                  & \left. \left. 
        + \tgXtx_t \partial_{y\mu}H(X_{t}^{x,\xi},\mathcal{L}_{X_{t}^{\xi}},Y_{t}^{x,\xi},\tilde{X}_{t}^{\xi})
        \right] 
        \right\} \rmd t,
  \end{aligned}\\
  \begin{aligned}
    \rmd \gYxx_t =- & \left\{ 
        \gXxx_t \partial_{xx}H(X_{t}^{x,\xi},\mathcal{L}_{X_{t}^{\xi}},Y_{t}^{x,\xi}) 
        + \gYxx_t \partial_{xy}H(X_{t}^{x,\xi},\mathcal{L}_{X_{t}^{\xi}},Y_{t}^{x,\xi}) 
        \right. \\
                  &-r\gYxx_t + \tilde{\dbE}_{\cF_t} \left[ 
        \tgtxX_t \partial_{x\mu}H(X_{t}^{x,\xi},\mathcal{L}_{X_{t}^{\xi}},Y_{t}^{x,\xi},\tilde{X}_{t}^{x,\xi}) 
        \right. \\
                  & \left. \left. 
        + \tgXtx_t \partial_{x\mu}H(X_{t}^{x,\xi},\mathcal{L}_{X_{t}^{\xi}},Y_{t}^{x,\xi},\tilde{X}_{t}^{\xi})
        \right] 
        \right\} \rmd t\\
        +& \gZxx_t \rmd B_t ,
  \end{aligned}\\
  \gXxx_0=0.
\end{cases}
\end{equation}

\begin{lem}
  \label{lem: b and c}
  For any $(x, \xi, \tilde{x}) \in \mathbb{R} \times \dbL^2(\mathcal{F}_0) \times \mathbb{R}$, 
  the FBSDEs \eqref{eq: g pa1}, \eqref{eq: g pa2}, and \eqref{eq: g total} admit unique solutions in $L_r^2$. 
  Moreover, the following mappings are uniformly bounded and jointly continuous:
  \begin{enumerate}
    \item $(x, \xi) \to ( \gxX, \gxY , \gxY_0)$ for the solution of \eqref{eq: g pa1},
    \item $(x, \xi) \to ( \gXx ,  \gYx , \gYx_0)$ for the solution of \eqref{eq: g pa2},
    \item $(x, \xi, \tilde{x}) \to ( \gXxx ,  \gYxx , \gYxx_0)$ for the solution of \eqref{eq: g total}.
  \end{enumerate}
\end{lem}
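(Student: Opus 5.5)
The three systems are triangular: \eqref{eq: g pa1} involves only the given coefficients along $(X^{x,\xi},Y^{x,\xi})$. System \eqref{eq: g pa2} uses $\nabla X^{x,\xi}$ as an input through its independent copy $\nabla\tilde X^{x,\xi}$. System \eqref{eq: g total} uses both as inputs. I would therefore handle them in this order, each time applying Theorem \ref{thm: fbsde} with $K=r$ and treating the already-solved processes as given sources.

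For \eqref{eq: g pa1}, the drivers are linear with coefficients bounded by $\lambda_3$ (Assumption \ref{assum: Hb}(iii)), so Assumption \ref{assum: fbsde}(i)--(ii) hold. For (iii), the monotonicity quantity is $-r\hat X\hat Y-\hat X(\hat X\partial_{xx}H+\hat Y\partial_{xy}H-r\hat Y)+\hat Y(\hat X\partial_{xy}H+\hat Y\partial_{yy}H)=-\hat X^2\partial_{xx}H+\hat Y^2\partial_{yy}H\le -\lambda_1(\hat X^2+\hat Y^2)$, and $\lambda_1>r/2$ by (ii).

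For \eqref{eq: g pa2} and \eqref{eq: g total}, the $\tilde{\dbE}_{\cF_t}$ terms split into two parts. The first, involving $\nabla\tilde X^{x,\xi}$ (resp. $\nabla\tilde X^{\tilde x,\xi}$), is an exogenous $L^2_r$ source. The second is a mean-field term linear in $\nabla\tilde X^{\xi,x}$ with kernel bounded by $\lambda_2$. It contributes at most $2\lambda_2\dbE[\hat X^2+\hat Y^2]$ to the monotonicity estimate via Cauchy--Schwarz and $\dbE|\tilde{\dbE}_{\cF_t}[\cdot]|^2\le\lambda_2^2\dbE[\hat X^2]$. Hence $\kappa=\lambda_1-2\lambda_2>r/2$ works, and this is exactly where $-\lambda_1+2\lambda_2<-r/2$ enters. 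In \eqref{eq: g total} the unknown $\nabla_\mu X^{x,\xi,\tilde x}$ does not appear under $\tilde{\dbE}$, so it is even simpler. This gives existence and uniqueness in $L^2_r$.

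Uniform boundedness would follow the scheme of Lemma \ref{lem: blf}. First apply It\^o's formula to $\rme^{-rt}\,\nabla X\,\nabla Y$ to get
\[
\|\nabla X\|_r^2+\|\nabla Y\|_r^2\le C\bigl(|\nabla X_0\,\nabla Y_0|+\text{source norms}\bigr),
\]
then apply It\^o's formula to $\rme^{-rt}(\nabla Y)^2$ to bound $|\nabla Y_0|^2$ by the $L^2_r$ norms, and absorb with a small $\epsilon$. The initial data are $1$ or $0$, and the source norms are controlled by the earlier systems. All constants depend only on $\lambda_1,\lambda_2,\lambda_3,r$, so the bounds are uniform in $(x,\xi,\tilde x)$.

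Continuity would follow Corollary \ref{cor: cont}. Take $(x_n,\xi_n,\tilde x_n)\to(x,\xi,\tilde x)$ and form the differences of the solutions. These satisfy the same linear FBSDE type with additional source terms of the form (coefficient difference)$\times$(bounded solution). Since the second derivatives of $H$ are Lipschitz, these coefficient differences are dominated by $|X^{x_n,\xi_n}-X^{x,\xi}|+|Y^{x_n,\xi_n}-Y^{x,\xi}|+\cW_2$ terms, and by continuity of the base solutions in $L^2_r$ (as in \cite{song2025infinite}, Theorem 5.2) they tend to zero in measure. The same It\^o estimates then bound the differences by the $L^2_r$ norm of these sources.

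I expect the main obstacle to be that a product of a bounded, convergent-in-measure coefficient difference with an $L^2_r$ process need not go to zero without uniform integrability. I would get it from the fact that the coefficients are uniformly bounded: dominated convergence along subsequences then applies, provided the multiplying process is fixed, and it is, namely the limit solution. Writing the difference equation so that coefficient differences always multiply the limit solution resolves this. Continuity in $\tilde x$ enters only through $\nabla X^{\tilde x,\xi}$ from part 1.
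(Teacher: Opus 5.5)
Your proposal is correct and follows the same route as the paper. The paper's proof simply asserts well-posedness, then refers to the It\^o estimates of Lemma \ref{lem: blf} for uniform boundedness and to Corollary \ref{cor: cont} for continuity. Your verification of Assumption \ref{assum: fbsde}(iii) with $\kappa=\lambda_1-2\lambda_2>r/2$, and your choice to write the difference equations so that coefficient differences always multiply the fixed limit solution, supply exactly the details the paper leaves implicit.

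One small correction: in \eqref{eq: g total} the $\tilde x$-dependence also enters through the input $\nabla\tilde X^{\xi,\tilde x}$ coming from \eqref{eq: g pa2}, not only through part 1. You therefore also need part 2's continuity in its $x$-argument, which your argument already provides.
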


\proof
Under Assumption \ref{assum: Hb}, the existence and uniqueness of solutions to these three FBSDEs is straightforward. 
Moreover, the uniform boundedness of their solutions can be readily established by following a 
reasoning similar to the proof of Lemma \ref{lem: blf},
and the continuity of the solutions with respect to their initial values can be established by 
following a proof analogous to that in Corollary \ref{cor: cont}.
\qed

\begin{rem}
If we define $\psi(x,\mu,\tilde{x})\triangleq \gYxx_0$ for some $\xi\in\dbL^2(\cF_0;\mu)$,
then $\psi$ is uniformly bounded and jointly continuous on $\dbR\times\cP_2\times\dbR$.
\end{rem}

Now, we consider the case that $\mu$ is absolutely continuous and choose some $\xi\in \dbL^2(\cF_0;\mu)$.
For each $n\geq 1$, set
\begin{equation}
  x^n_i\triangleq \frac{i}{n},\quad \triangle^n_i\triangleq [\frac{i}{n},\frac{i+1}{n}),\quad i\in \dbZ.
\end{equation}
For any $x\in\dbR$, let $i^n(x)$ be the index function such that $x\in\triangle^n_{i^n(x)}$.
Denote
\begin{equation}
  \label{eq: de xin}
  \xi_n\triangleq \sum_{i=-n^2}^{n^2-1} x^n_i\cdot I_{\triangle^n_i}(\xi)-n^2I_{(-\infty,-n^2)}(\xi)
  +n^2I_{[n^2,\infty)}(\xi).
\end{equation}
It's clear that $\lim_{n\to\infty}\xi_n=\xi$, a.s., $\lim_{n\to\infty}\dbE[|\xi_n-\xi|^2]=0$,
and thus $\lim_{n\to\infty}\cW_2(\cL_{\xi_n},\cL_{\xi})=0$.
For any $\eta\in \dbL^2(\cF_0)$, by stability of FBSDE (\ref{eq: dxX}), we have that
\begin{equation}
  \dbE[\pa_\mu\mcv(x,\mu,\xi)\eta]=\dxY_0=\lim_{n\to\infty} \delta Y^{x,\xi_n,\eta}_0.
\end{equation}

The following lemma reveals the limiting relationship between FBSDEs (\ref{eq: d part})-(\ref{eq: d total}) 
and FBSDEs (\ref{eq: g pa1})-(\ref{eq: g total}), 
which provides a tool for our further study of $\delta Y^{x,\xi_n,\eta}_0$. 
Its proof differs slightly from that of Lemma \ref{lem: b and c}; 
hence, we provide a detailed demonstration here.

\begin{lem}
For each $(x, \xi, \tilde{x}) \in \mathbb{R} \times \dbL^2(\mathcal{F}_0) \times \mathbb{R}$
such that $\cL_\xi$ is absolutely continuous, and let $\{\xi_n; n \geq 1\}$ be defined as in (\ref{eq: de xin}).
We have that, as $n \to \infty$, the following limits hold (the convergence of the stochastic process is in the $L_r^2$ sense):
\begin{align}
& \text{(1)}\quad ( \nabla X^{\xi_n,\frac{i^n(\tilde{x})}{n}} , \nabla Y^{\xi_n,\frac{i^n(\tilde{x})}{n}} , \nabla Y^{\xi_n,\frac{i^n(\tilde{x})}{n}}_0)
  \to (\nabla X^{\tilde{x},\xi}, \nabla Y^{\tilde{x},\xi},\nabla Y^{\tilde{x},\xi}_0 ); \\
& \text{(2)}\quad ( \nabla X^{\xi_n,\frac{i^n(\tilde{x})}{n},\star} , \nabla Y^{\xi_n,\frac{i^n(\tilde{x})}{n},\star} , \nabla Y^{\xi_n,\frac{i^n(\tilde{x})}{n},\star}_0)
  \to (\nabla X^{\xi,\tilde{x}}, \nabla Y^{\xi,\tilde{x}},\nabla Y^{\xi,\tilde{x}}_0 ); \\
& \text{(3)}\quad ( \nabla_\mu X^{x,\xi_n,\frac{i^n(\tilde{x})}{n}} , \nabla_\mu Y^{x,\xi_n,\frac{i^n(\tilde{x})}{n}} , \nabla_\mu Y^{x,\xi_n,\frac{i^n(\tilde{x})}{n}}_0)
  \to (\nabla_\mu X^{x,\xi,\tilde{x}}, \nabla Y^{x,\xi,\tilde{x}},\nabla Y^{x,\xi,\tilde{x}}_0 ).
\end{align}
\end{lem}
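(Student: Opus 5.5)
We argue by stability of linear infinite-horizon FBSDEs, in the spirit of Corollary \ref{cor: cont}, but with the coefficients themselves perturbed. Write $x^n\triangleq i^n(\tilde{x})/n$. Then $|x^n-\tilde{x}|\le 1/n$ for $n$ large, and $\xi_n\to\xi$ in $\dbL^2$. By the continuity results already available (Theorem 5.2 of \cite{song2025infinite} and Lemma \ref{lem: b and c}), the background processes converge in $L^2_r$:
\begin{equation*}
(X^{\xi_n},Y^{\xi_n})\to(X^\xi,Y^\xi),\qquad (X^{x^n,\xi_n},Y^{x^n,\xi_n})\to(X^{\tilde{x},\xi},Y^{\tilde{x},\xi}),\qquad (X^{x,\xi_n},Y^{x,\xi_n})\to(X^{x,\xi},Y^{x,\xi}).
\end{equation*}
Since all second derivatives of $H$ are bounded and Lipschitz (Assumption \ref{assum: Hb}), every coefficient of (\ref{eq: d part})--(\ref{eq: d total}) then converges in measure $dt\otimes d\dbP$ to the corresponding coefficient of (\ref{eq: g pa1})--(\ref{eq: g total}), while staying uniformly bounded.

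The first step is to see what the limit system is. In (\ref{eq: d part}) with $\xi=\xi_n$ and $x_i=x^n$, the weight $p_i=p^n=\dbP(\xi_n=x^n)=\mu(\triangle^n_{i^n(\tilde{x})})$ tends to $0$, because $\mu$ is absolutely continuous. This is where absolute continuity enters. The factor $I_{\{\xi_n\neq x^n\}}$ tends to $1$ in probability. Hence in the limit the $p_i\tilde\dbE[\cdots]$ term drops out of the equation for $\nabla X^{\xi_n,x^n}$, which reduces to (\ref{eq: g pa1}) at the point $\tilde{x}$. The $\star$-equation becomes (\ref{eq: g pa2}) with the source $\tilde\dbE[\nabla\tilde X^{\tilde{x},\xi}\partial_{y\mu}H(\cdot,\tilde X^{\tilde{x},\xi})]$. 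Likewise (\ref{eq: d total}) becomes (\ref{eq: g total}). So I would prove (1), (2), (3) in that order, each time feeding the previously proven convergence in as a converging source term.

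For (1), set $\hat X^n=\nabla X^{\xi_n,x^n}-\nabla X^{\tilde{x},\xi}$, and similarly $\hat Y^n$, $\hat Z^n$. This triple solves a linear FBSDE with the limit coefficients, zero initial value, and a remainder $R^n$. The remainder consists of:
\begin{itemize}
\item (coefficient difference) $\times$ (the $n$-th solution);
\item the term $p^n\tilde\dbE[\ldots]$.
\end{itemize}
Applying It\^o to $\rme^{-rt}\hat X^n_t\hat Y^n_t$ and to $\rme^{-rt}(\hat Y^n_t)^2$, exactly as in Lemma \ref{lem: blf} (the monotonicity $-\lambda_1+2\lambda_2<-r/2$ gives the coercive bound), yields
\begin{equation*}
\|\hat X^n\|_r^2+\|\hat Y^n\|_r^2+|\hat Y^n_0|^2\le C\,\dbE\int_0^\infty \rme^{-rt}|R^n_t|\,(|\hat X^n_t|+|\hat Y^n_t|)\,\rmd t,
\end{equation*}
so it suffices to show $\|R^n\|_r\to0$. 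The solutions are uniformly bounded in $L^2_r$, by the argument of Lemma \ref{lem: blf}, uniformly in $n$.

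The main obstacle is this last point. The coefficient differences are bounded and converge in measure, but they multiply processes that are only bounded in $L^2_r$, not in $L^\infty$. So dominated convergence cannot be applied directly. The same issue arises in Corollary \ref{cor: cont}, and I would treat it the same way. First, the solutions of (\ref{eq: g pa1})--(\ref{eq: g total}) are fixed $L^2_r$ processes, so the product (coefficient difference) $\times$ (limit solution) tends to $0$ by dominated convergence. Second, this requires writing the remainder as (coefficient difference) $\times$ (limit solution) rather than $\times$ ($n$-th solution), which is achieved by putting the $n$-th coefficients into the linear operator for $\hat X^n,\hat Y^n$. The monotonicity estimate holds uniformly in $n$, since it only uses the bounds of Assumption \ref{assum: Hb}.

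The $p^n$-term is bounded by $p^n C(\|\nabla\tilde X^{\xi_n,x^n}\|_r+\|\nabla\tilde X^{\xi_n,x^n,\star}\|_r)\to0$. The conditional-expectation terms $\tilde\dbE_{\cF_t}[\cdot]$ with the indicator $I_{\{\xi_n\neq x^n\}}$ differ from the limit by at most $C\,\dbP(\xi_n=x^n)^{1/2}$ times bounded norms, again vanishing.

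With (1) established, (2) follows by the same estimate, with (1) controlling the source term; (3) then follows with (1) and (2) controlling the source terms.
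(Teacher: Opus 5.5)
Your proposal is correct and follows essentially the same route as the paper. You form the differences with the $n$-th coefficients kept in the linear operator, so that the remainder is (coefficient difference)$\times$(limit solution) plus the $p^n$-weighted mean-field term. You then apply It\^o's formula to $\rme^{-rt}\hat X^n_t\hat Y^n_t$ and $\rme^{-rt}(\hat Y^n_t)^2$, and conclude with the monotonicity of Assumption \ref{assum: Hb}, the uniform $L^2_r$ bounds, dominated convergence, and absolute continuity of $\cL_\xi$ used only to get $p^n\to0$. Your explicit handling of the $I_{\{\xi_n\neq x^n\}}$ factor as an $O((p^n)^{1/2})$ error, and the ordering (1)$\Rightarrow$(2)$\Rightarrow$(3), just fill in what the paper calls ``analogous''.
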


\proof
For each $\tilde{x}\in\dbR$, we have $(\cL_{\xi_n},i^n(\tilde{x})/n)\to (\cL_\xi, \tilde{x})$ as $n\to\infty$,
then we have 
\begin{equation}
  (X^{\xi_n},Y^{\xi_n})\to (X^{\xi},Y^{\xi}),\quad (X^{\frac{i^n(\tilde{x})}{n},\xi_n},Y^{\frac{i^n(\tilde{x})}{n},\xi_n})
  \to (X^{\tilde{x},\xi},Y^{\tilde{x},\xi})
\end{equation}
in $L_r^2$.
An essential observation is that the solutions of FBSDEs \eqref{eq: d part}-\eqref{eq: d total} and \eqref{eq: g pa1}-\eqref{eq: g total} 
are uniformly bounded in $L_r^2$
for all initial values, which is important to the subsequent proof.
Denote
\begin{equation}
  X^n=\nabla X^{\xi_n,\frac{i^n(\tilde{x})}{n}}-\nabla X^{\tilde{x},\xi},\quad 
  Y^n=\nabla Y^{\xi_n,\frac{i^n(\tilde{x})}{n}}-\nabla X^{\tilde{x},\xi},\quad 
  Z^n=\nabla Z^{\xi_n,\frac{i^n(\tilde{x})}{n}}-\nabla X^{\tilde{x},\xi},
\end{equation}
then they satisfy:
\begin{equation}
\begin{cases}
  \begin{aligned}
    \rmd X^n_t = & \left\{ 
        X^n_t \partial_{xy}H(X_{t}^{\frac{i^n(\tilde{x})}{n},\xi_n},\mathcal{L}_{X_{t}^{\xi_n}},Y_{t}^{\frac{i^n(\tilde{x})}{n},\xi_n}) 
        + Y^n_t \partial_{yy}H(X_{t}^{\frac{i^n(\tilde{x})}{n},\xi_n},\mathcal{L}_{X_{t}^{\xi_n}},Y_{t}^{\frac{i^n(\tilde{x})}{n},\xi_n})  

        \right. \\
        &+X^{\tilde{x},\xi}_t H_{xy}^n+ Y^{\tilde{x},\xi}_t H_{yy}^n
        \\
                  & + p^n\tilde{\dbE}_{\cF_t} \left[ 
        \nabla \tilde{X}^{\xi_n,\frac{i^n(\tilde{x})}{n}} _t \partial_{y\mu}H(X_{t}^{\frac{i^n(\tilde{x})}{n},\xi_n},\mathcal{L}_{X_{t}^{\xi_n}},Y_{t}^{\frac{i^n(\tilde{x})}{n},\xi_n},\tilde{X}_{t}^{\frac{i^n(\tilde{x})}{n},\xi_n}) 
        \right. \\
         &          \left. \left. 
        + \nabla \tilde{X}^{\xi_n,\frac{i^n(\tilde{x})}{n},\star}_t \partial_{y\mu}H(X_{t}^{\frac{i^n(\tilde{x})}{n},\xi_n},\mathcal{L}_{X_{t}^{\xi_n}},Y_{t}^{\frac{i^n(\tilde{x})}{n},\xi_n},\tilde{X}_{t}^{\xi_n})

        \right] 
        \right\} \rmd t,
  \end{aligned}\\
\begin{aligned}
    \rmd Y^n_t = -& \left\{ 
        X^n_t \partial_{xx}H(X_{t}^{\frac{i^n(\tilde{x})}{n},\xi_n},\mathcal{L}_{X_{t}^{\xi_n}},Y_{t}^{\frac{i^n(\tilde{x})}{n},\xi_n}) 
        + Y^n_t \partial_{xy}H(X_{t}^{\frac{i^n(\tilde{x})}{n},\xi_n},\mathcal{L}_{X_{t}^{\xi_n}},Y_{t}^{\frac{i^n(\tilde{x})}{n},\xi_n}) 

        \right. \\
        &-rY^n_t + X^{\tilde{x},\xi}_t H_{xx}^n + Y^{\tilde{x},\xi}_t H_{xy}^n \\
                  & + p_i^n\tilde{\dbE}_{\cF_t} \left[ 
        \nabla \tilde{X}^{\xi_n,\frac{i^n(\tilde{x})}{n}} _t \partial_{x\mu}H(X_{t}^{\frac{i^n(\tilde{x})}{n},\xi_n},\mathcal{L}_{X_{t}^{\xi_n}},Y_{t}^{\frac{i^n(\tilde{x})}{n},\xi_n},\tilde{X}_{t}^{\frac{i^n(\tilde{x})}{n},\xi_n}) 
        \right. \\
                  & \left. \left. 
        + \nabla \tilde{X}^{\xi_n,\frac{i^n(\tilde{x})}{n},\star} _t \partial_{x\mu}H(X_{t}^{\frac{i^n(\tilde{x})}{n},\xi_n},\mathcal{L}_{X_{t}^{\xi_n}},Y_{t}^{\frac{i^n(\tilde{x})}{n},\xi_n},\tilde{X}_{t}^{\xi_n})

        \right] 
        \right\} \rmd t \\
        +& Z^n_t \rmd B_t ,
\end{aligned}\\
X^n_0=0,
\end{cases}
\end{equation}
where
\begin{equation}
  \begin{aligned}
    H_{xx}^n&\triangleq \partial_{xx}H(X_{t}^{\frac{i^n(\tilde{x})}{n},\xi_n},\mathcal{L}_{X_{t}^{\xi_n}},Y_{t}^{\frac{i^n(\tilde{x})}{n},\xi_n}) 
                      -\partial_{xx}H(X_{t}^{\tilde{x},\xi},\mathcal{L}_{X_{t}^{\xi}},Y_{t}^{\tilde{x},\xi}), \\
      H_{xy}^n&\triangleq \partial_{xy}H(X_{t}^{\frac{i^n(\tilde{x})}{n},\xi_n},\mathcal{L}_{X_{t}^{\xi_n}},Y_{t}^{\frac{i^n(\tilde{x})}{n},\xi_n}) 
                      -\partial_{xy}H(X_{t}^{\tilde{x},\xi},\mathcal{L}_{X_{t}^{\xi}},Y_{t}^{\tilde{x},\xi}), \\
      H_{yy}^n&\triangleq \partial_{yy}H(X_{t}^{\frac{i^n(\tilde{x})}{n},\xi_n},\mathcal{L}_{X_{t}^{\xi_n}},Y_{t}^{\frac{i^n(\tilde{x})}{n},\xi_n}) 
                      -\partial_{yy}H(X_{t}^{\tilde{x},\xi},\mathcal{L}_{X_{t}^{\xi}},Y_{t}^{\tilde{x},\xi}), \\
p^n&\triangleq \dbP(\xi_n=\frac{i^n(\tilde{x})}{n})=\dbP(\xi\in \triangle^n_{i^n(\tilde{x})}).
  \end{aligned}
\end{equation}
Applying It\^{o}'s formula to $\rme^{-rt}X^n_tY^n_t$, we get that
\begin{equation}
\begin{aligned}
0=&
\mathbb{E} \bigg[ \int_0^\infty \rme^{-rt} 
 \biggl( - 
       ( X^n_t)^2 \partial_{xx}H(X_{t}^{\frac{i^n(\tilde{x})}{n},\xi_n},\mathcal{L}_{X_{t}^{\xi_n}},Y_{t}^{\frac{i^n(\tilde{x})}{n},\xi_n}) 
    + (Y^n_t)^2 \partial_{yy}H(X_{t}^{\frac{i^n(\tilde{x})}{n},\xi_n},\mathcal{L}_{X_{t}^{\xi_n}},Y_{t}^{\frac{i^n(\tilde{x})}{n},\xi_n})  
 \\
&-X^n_t (X^{\tilde{x},\xi}_t H_{xx}^n + Y^{\tilde{x},\xi}_t H_{xy}^n   )
+Y^n_t(X^{\tilde{x},\xi}_t H_{xy}^n + Y^{\tilde{x},\xi}_t H_{yy}^n)\\
&-p^n X^n_t \tilde{\mathbb{E}}_{\mathcal{F}_t} \biggl[ 
        \nabla \tilde{X}^{\xi_n,\frac{i^n(\tilde{x})}{n}} _t \partial_{x\mu}H(X_{t}^{\frac{i^n(\tilde{x})}{n},\xi_n},\mathcal{L}_{X_{t}^{\xi_n}},Y_{t}^{\frac{i^n(\tilde{x})}{n},\xi_n},\tilde{X}_{t}^{\frac{i^n(\tilde{x})}{n},\xi_n}) 
        \\
&\quad + \nabla \tilde{X}^{\xi_n,\frac{i^n(\tilde{x})}{n},\star} _t \partial_{x\mu}H(X_{t}^{\frac{i^n(\tilde{x})}{n},\xi_n},\mathcal{L}_{X_{t}^{\xi_n}},Y_{t}^{\frac{i^n(\tilde{x})}{n},\xi_n},\tilde{X}_{t}^{\xi_n})
        \biggr] \\
&+p^n Y^n_t \tilde{\mathbb{E}}_{\mathcal{F}_t} \biggl[ 
        \nabla \tilde{X}^{\xi_n,\frac{i^n(\tilde{x})}{n}} _t \partial_{y\mu}H(X_{t}^{\frac{i^n(\tilde{x})}{n},\xi_n},\mathcal{L}_{X_{t}^{\xi_n}},Y_{t}^{\frac{i^n(\tilde{x})}{n},\xi_n},\tilde{X}_{t}^{\frac{i^n(\tilde{x})}{n},\xi_n}) 
        \\
&\quad + \nabla \tilde{X}^{\xi_n,\frac{i^n(\tilde{x})}{n},\star}_t \partial_{y\mu}H(X_{t}^{\frac{i^n(\tilde{x})}{n},\xi_n},\mathcal{L}_{X_{t}^{\xi_n}},Y_{t}^{\frac{i^n(\tilde{x})}{n},\xi_n},\tilde{X}_{t}^{\xi_n})
        \biggr] \biggr)
 \mathrm{d}t \bigg]. 
\end{aligned}
\end{equation}
Using the conditions in Assumption \ref{assum: Hb}, we have that
\begin{equation}
\begin{aligned}
  &\mathbb{E} \bigg[ \int_0^\infty \rme^{-rt} \biggl( ( X^n_t)^2 + (Y^n_t)^2 \biggr)\mathrm{d}t \bigg]\\
\leq& \frac{2}{r} \mathbb{E} \bigg[ \int_0^\infty \rme^{-rt} \biggl(
-X^n_t (X^{\tilde{x},\xi}_t H_{xx}^n + Y^{\tilde{x},\xi}_t H_{xy}^n   )
+Y^n_t(X^{\tilde{x},\xi}_t H_{xy}^n + Y^{\tilde{x},\xi}_t H_{yy}^n)\\
&\quad +p^n\lambda_2
( |X^n_t|+    |Y^n_t|) \tilde{\mathbb{E}}_{\mathcal{F}_t} \left[ 
        |\nabla \tilde{X}^{\xi_n,\frac{i^n(\tilde{x})}{n}} _t |+ |\nabla \tilde{X}^{\xi_n,\frac{i^n(\tilde{x})}{n},\star} _t|    \right]
\biggr)\mathrm{d}t \bigg].
\end{aligned}
\end{equation}
From the uniform boundedness of $X^n$ and $Y^n$ and the Dominated Convergence Theorem, we conclude that:
\begin{equation}
  \lim_{n\to\infty}
  \mathbb{E} \bigg[ \int_0^\infty \rme^{-rt} \biggl(
-X^n_t (X^{\tilde{x},\xi}_t H_{xx}^n + Y^{\tilde{x},\xi}_t H_{xy}^n   )
+Y^n_t(X^{\tilde{x},\xi}_t H_{xy}^n + Y^{\tilde{x},\xi}_t H_{yy}^n)
\biggr)\mathrm{d}t \bigg]=0.
\end{equation}
Since $\cL_\xi$ is absolutely continuous, we have $p^n\to 0$, and thus:
\begin{equation}
\lim_{n\to\infty}  \mathbb{E} \bigg[ \int_0^\infty \rme^{-rt} \biggl(p^n
( |X^n_t|+    |Y^n_t|) \tilde{\mathbb{E}}_{\mathcal{F}_t} \left[ 
        |\nabla \tilde{X}^{\xi_n,\frac{i^n(\tilde{x})}{n}} _t |+ |\nabla \tilde{X}^{\xi_n,\frac{i^n(\tilde{x})}{n},\star} _t|    \right]
\biggr)\mathrm{d}t \bigg]=0.
\end{equation}
By combining all the above relations, we arrive at the following convergence result:
\begin{equation}
\lim_{n\to\infty}  \mathbb{E} \bigg[ \int_0^\infty \rme^{-rt} \biggl( ( X^n_t)^2 + (Y^n_t)^2 \biggr)\mathrm{d}t \bigg]=0.
\end{equation}
Applying It\^{o}'s formula to  $\rme^{-rt}(Y^n_t)^2$, we can easily obtain 
$Y^n_0\to 0$, thus finishing the proof of (1). The proofs for (2) and (3) follow analogously.
\qed

The following theorem provides a bounded and continuous representation of  $\pa_\mu\mcv(x,\mu,\tilde{x})$.
This implies that $\mcv(x,\mu)$ is Lions-differentiable with respect to $\mu$.

\begin{thm}
Define $\psi(x,\mu,\tilde{x})\triangleq \gYxx_0$ for some $\xi\in\dbL^2(\cF_0;\mu)$,
then for any $(x,\mu)\in\dbR\times \cP_2$, $\psi(x,\mu,\cdot)$ is a version of $\pa_\mu\mcv(x,\mu,\cdot)$.
\end{thm}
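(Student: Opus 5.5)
Since $\psi$ is bounded and continuous (the Remark after Lemma \ref{lem: b and c}), the claim is equivalent to
\[
\dxY_0=\dbE[\psi(x,\mu,\xi)\eta] \qquad \text{for all } \eta\in\dbL^2(\cF_0),\ \xi\in\dbL^2(\cF_0;\mu).
\]
Uniqueness of $D^{x,\xi}$ (Riesz) then gives $D^{x,\xi}=\psi(x,\mu,\xi)$ a.s., i.e. $\psi(x,\mu,\cdot)$ is a version of $\pa_\mu\mcv(x,\mu,\cdot)$ $\mu$-a.e. I will prove the identity in three stages: $\mu$ discrete, $\mu$ absolutely continuous, and general $\mu$.

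\emph{Discrete $\mu$.} For $\mu$ discrete with finitely many atoms $x_i$, the previous theorem gives $\pa_\mu\mcv(x,\mu,x_i)=\nabla_\mu Y_0^{x,\xi,x_i}$. I will check that this coincides with $\psi(x,\mu,x_i)=\nabla_\mu Y_0^{x,\xi,x_i}$ as defined through \eqref{eq: g total}. This amounts to comparing the systems \eqref{eq: d part}--\eqref{eq: d total} with \eqref{eq: g pa1}--\eqref{eq: g total} at atoms. The decomposition $\delta X^{\xi,I_{\{\xi=x_i\}}}=\nabla X^{\xi,x_i}I_{\{\xi=x_i\}}+p_i\nabla X^{\xi,x_i,\star}$ should match the pair $\big(\nabla X^{x_i,\xi}, \nabla X^{\xi,x_i}\big)$, possibly after rescaling by $p_i$. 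Since $\psi$ is only needed on the support, I will not dwell on this case beyond what the absolutely continuous case requires.

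\emph{Absolutely continuous $\mu$.} Take the approximations $\xi_n$ from \eqref{eq: de xin}. By the stability identity,
\[
\dxY_0=\lim_{n\to\infty}\delta Y_0^{x,\xi_n,\eta}.
\]
Writing $\eta_n\triangleq\dbE[\eta|\xi_n]=\sum_i c^n_i I_{\{\xi_n=x^n_i\}}$ is not quite enough, because $\eta$ is not $\sigma(\xi_n)$-measurable. However, the discrete-case argument (the $D^{x,\xi_n}=f(\xi_n)$ property) gives
\[
\delta Y_0^{x,\xi_n,\eta}=\dbE[\pa_\mu\mcv(x,\cL_{\xi_n},\xi_n)\eta]=\sum_i \nabla_\mu Y_0^{x,\xi_n,x^n_i}\,\dbE[\eta I_{\{\xi_n=x^n_i\}}].
\]
This equals $\dbE[g_n(\xi)\eta]$ with $g_n(\tilde x)\triangleq\nabla_\mu Y_0^{x,\xi_n,i^n(\tilde x)/n}$ for $\xi$ in the truncated range (the tail atoms have vanishing mass). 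By part (3) of the last lemma, $g_n(\tilde x)\to\nabla_\mu Y_0^{x,\xi,\tilde x}=\psi(x,\mu,\tilde x)$ for every $\tilde x$. The $g_n$ are uniformly bounded by the uniform $L_r^2$ bounds (Lemma \ref{lem: blf} style estimates, uniform in initial data). Dominated convergence, with dominating function $C|\eta|$, then gives $\dbE[g_n(\xi)\eta]\to\dbE[\psi(x,\mu,\xi)\eta]$, which proves the identity.

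\emph{General $\mu\in\cP_2$.} This is where I expect the main obstacle, because the previous lemma used $p^n\to0$, which fails at atoms. I would handle it by a second approximation. Set $\xi^\epsilon\triangleq\xi+\epsilon G$ with $G$ an independent $\cF_0$-measurable standard Gaussian, so that $\cL_{\xi^\epsilon}$ is absolutely continuous and the identity holds for $\xi^\epsilon$. Using Corollary \ref{cor: cont}, $\dxY_0=\lim_{\epsilon\to0}\delta Y_0^{x,\xi^\epsilon,\eta}=\lim_{\epsilon\to0}\dbE[\psi(x,\cL_{\xi^\epsilon},\xi^\epsilon)\eta]$. Joint continuity and boundedness of $\psi$ (the Remark) give $\psi(x,\cL_{\xi^\epsilon},\xi^\epsilon)\to\psi(x,\mu,\xi)$ a.s. and boundedly, so dominated convergence yields $\dxY_0=\dbE[\psi(x,\mu,\xi)\eta]$ for all $\eta$. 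This last step in fact subsumes the discrete case, so stage one is only a consistency check.

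The delicate points are: (i) verifying that the uniform-in-initial-data bounds really make $\psi$ bounded and jointly continuous in $\mu$ under $\cW_2$, including independence from the choice of $\xi\in\dbL^2(\cF_0;\mu)$, which should follow from weak uniqueness; and (ii) justifying the identification of $\delta Y_0^{x,\xi_n,\eta}$ via $\sigma(\xi_n)$-conditioning.
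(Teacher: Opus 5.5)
Your proposal is correct and follows essentially the same route as the paper. Both arguments handle the absolutely continuous case through the discretization $\xi_n$, the atom-wise identity $\pa_\mu\mcv(x,\cL_{\xi_n},x^n_i)=\nabla_\mu Y_0^{x,\xi_n,x^n_i}$, part (3) of the convergence lemma and bounded convergence, then pass to general $\mu$ by approximating with absolutely continuous laws and using the joint continuity and boundedness of $\psi$. The differences are cosmetic: you test against arbitrary $\eta\in\dbL^2(\cF_0)$, using the already established fact that $D^{x,\xi_n}$ is a function of $\xi_n$, where the paper tests against $\varphi(\xi)$ with $\varphi$ bounded Lipschitz; and you make the smoothing explicit as $\xi+\epsilon G$.
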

\proof
For any $(x,\mu)$, we first consider the case where $\mu$ is absolutely continuous.
Take $\xi\in \dbL^2(\cF_0;\mu)$ and construct $\{\xi_n;n\geq 1\}$ as in (\ref{eq: de xin}).
For any bounded and Lipschitz continuous function $\varphi$, 
we have $\{\varphi(\xi_n)\}$ converges to $\xi$ in $\dbL^2$.
So we can get that
\begin{equation}
\begin{aligned}
  \dbE[\pa_\mu \mcv(x,\mu,\xi)\varphi(\xi)]=&\delta Y_0^{x,\xi,\varphi(\xi)}
  =\lim_{n\to\infty} \delta Y_0^{x,\xi_n,\varphi(\xi_n)}\\
=&\lim_{n\to\infty}\sum_{i=-n^2}^{n^2}\varphi(x^n_i) \delta Y_0^{x,\xi_n,I_{\{\xi_n=x^n_i\}}}\\
=&\lim_{n\to\infty}\sum_{i=-n^2}^{n^2}\varphi(x^n_i) \nabla_\mu Y_0^{x,\xi_n,{x^n_i}}\dbP(\xi_n=x^n_i)\\
=&\lim_{n\to\infty}\dbE\left[\varphi(\xi_n(\omega)) \nabla_\mu Y_0^{x,\xi_n,\xi_n(\omega)}\right]\\
=&\dbE\left[\varphi(\xi(\omega)) \nabla_\mu Y_0^{x,\xi,\xi(\omega)} \right].
\end{aligned}
\end{equation}
The last equality holds due to the convergence of $\varphi(\xi_n(\omega)) \nabla_\mu Y_0^{x,\xi_n,\xi_n(\omega)}$ 
to $\varphi(\xi(\omega)) \nabla_\mu Y_0^{x,\xi,\xi(\omega)}  $ in probability and the Bounded Convergence Theorem.
Then we have
\begin{equation}
  \pa_\mu \mcv(x,\mu,\cdot)= \psi(x,\mu,\cdot),\quad ~\mbox{$\mu$-a.e.}
\end{equation}
whenever $\mu$ is absolutely continuous.

Now we consider the general case. Fix an arbitrary $(\mu,\xi)$. One can easily construct
$\{\xi_n;n\geq 1\}$ such that $\cL_{\xi_n}$ is absolutely continuous, and $\dbE[|\xi_n-\xi|^2]\to 0$.
Then we have
\begin{equation}
\begin{aligned}
  \dbE[\pa_\mu \mcv(x,\mu,\xi)\varphi(\xi)]=&\delta Y_0^{x,\xi,\varphi(\xi)}
  =\lim_{n\to\infty} \delta Y_0^{x,\xi_n,\varphi(\xi_n)}\\
=&\lim_{n\to\infty} \dbE\left[\psi(x,\cL_{\xi_n},\xi_n)\varphi(\xi_n)\right]\\
=&\dbE\left[\psi(x,\cL_{\xi},\xi)\varphi(\xi)\right].
\end{aligned}
\end{equation}
Thus, we finish the proof.
\qed

\section*{Acknowledgements}
Song Y. is financially supported by National Key R\&D Program of China (No. 2024YFA1013503 \& No. 2020YFA0712700) and the National Natural Science Foundation of China (No. 12431017).

\end{document}